\newtheorem{theorem}{Theorem}[section]
\newtheorem{lemma}[theorem]{Lemma}
\newtheorem{proposition}[theorem]{Proposition}
\theoremstyle{definition}
\theoremstyle{remark}
\numberwithin{equation}{section}
\begin{document}
	
	\def\Pro{{\mathbb{P}}}
	\def\E{{\mathbb{E}}}
	\def\e{{\varepsilon}}
	\def\veps{{\varepsilon}}
	\def\ds{{\displaystyle}}
	\def\nat{{\mathbb{N}}}
	\def\Dom{{\textnormal{Dom}}}
	\def\dist{{\textnormal{dist}}}
	\def\R{{\mathbb{R}}}
	\def\O{{\mathcal{O}}}
	\def\T{{\mathcal{T}}}
	\def\Tr{{\textnormal{Tr}}}
	\def\sgn{{\textnormal{sign}}}
	\def\I{{\mathcal{I}}}
	\def\A{{\mathcal{A}}}
	\def\H{{\mathcal{H}}}
	\def\S{{\mathcal{S}}}
	
	\title{Solutions to the stochastic heat equation with polynomially growing multiplicative noise do not explode in the critical regime}%
	\author{M. Salins\\ Boston University \\ msalins@bu.edu }
	\maketitle
	
	\begin{abstract}
		We investigate the finite time explosion of the stochastic heat equation $\frac{\partial u}{\partial t} = \Delta u(t,x) + \sigma(u(t,x))\dot{W}(t,x)$ in the critical setting where $\sigma$ grows like $\sigma(u) \approx C(1 + |u|^\gamma)$ and $\gamma = \frac{3}{2}$. Mueller previously identified $\gamma=\frac{3}{2}$ as the critical growth rate for explosion and proved that solutions cannot explode in finite time if $\gamma< \frac{3}{2}$ and solutions will explode with positive probability if $\gamma>\frac{3}{2}$. This paper proves that explosion does not occur in the critical $\gamma=\frac{3}{2}$ setting.
	\end{abstract}

\section{Introduction} \label{S:intro}
We investigate whether solutions to the stochastic heat equation explode in finite time. The equation is
\begin{equation} \label{eq:SPDE}
	\begin{cases}
	\frac{\partial u}{\partial t}(t,x) = \Delta u(t,x) + \sigma(u(t,x)) \dot{W}(t,x), & x \in [-\pi,\pi], t>0\\
	u(t,-\pi) = u(t,\pi), & t>0\\
	u(0,x) = u_0(x) \text{ bounded and periodic}.
\end{cases}
\end{equation}
where $\sigma$ is locally Lipschitz continuous and satisfies the critical superlinear growth restriction that there exists $C>0$ such that for all $u \in \mathbb{R}$
\begin{equation} \label{eq:sigma-growth}
	|\sigma(u)| \leq C( 1 + |u|^{\frac{3}{2}})
\end{equation}
The spatial domain is $D =[-\pi,\pi]$ and we impose periodic boundary conditions. The stochastic noise $\dot{W}$ is spacetime white noise and the initial data $u_0(x)$ is a bounded, continuous, periodic function.

In \cite{mueller-1991,mueller-1998,ms-1993,mueller-2000}, Mueller and Sowers proved that the polynomial growth rate of $|u|^{\frac{3}{2}}$ is critical in the sense that if $\sigma(u) \leq C(1 + |u|^\gamma)$ for some $C>0$ and $\gamma< \frac{3}{2}$, the solution to the SPDE \eqref{eq:SPDE} cannot explode in finite time. If $\sigma(u) \geq c|u|^\gamma$ for some $c>0$ and $\gamma>\frac{3}{2}$ then solutions will explode with positive probability. The question of whether solutions can explode in finite time in the critical case of $\gamma=3/2$ was left unsolved. In this paper we prove that solutions cannot explode in the critical regime where $\gamma = \frac{3}{2}$.

Mueller's results have been generalized to other  settings including fractional heat equations \cite{bezdek-2018,fln-2019}, nonlinear Schr\"odinger equation \cite{bd-2002}, and stochastic wave equation \cite{mueller-1997}.  More recently, researchers have investigated the effects that adding superlinear deterministic forcing terms $f(u(t,x))$ to the right-hand side of \eqref{eq:SPDE} has on the finite time explosion of the stochastic heat equation \cite{bg-2009,dkz-2019,salins-2022,fn-2021,sz-2022,ch-2023,fp-2015,lz-2022,av-2023}. Similar explosion problems have been investigated for the stochastic wave equation \cite{fn-wave-2022,ms-wave-2021}. Interestingly, in \cite{dkz-2019}, for example, the authors prove that if the additional force $f(u)$ grows like $|u| \log(|u|)$ then $\sigma$ can grow like $|u| (\log(|u|))^{\frac{1}{4}}$ and solutions will never explode -- a much slower growth rate than the allowable $|u|^{\frac{3}{2}}$ growth rate when $f \equiv 0$. This $|u|(\log|u|)^{\frac{1}{4}}$ growth rate is not known to be optimal and it will be interesting to see if explosion can occur is $\sigma(u) \approx (1 + |u|^\frac{3}{2})$ when $f$ grows superlinearly. 

In the opposite setting where $f$ is strongly dissipative, $\sigma$ can grow faster than $|u|^\frac{3}{2}$ and solutions will not explode because the dissipative forcing counteracts the expansion due to the noise \cite{salins-2022-dissip}. Specifically, in this space-time white noise setting, if $f(u)\text{sign}(u) \leq -\mu |u|^\beta$ for some $\beta>3$, then $\sigma$ can grow like $C(1 +|u|^\gamma)$ for any $\gamma < \frac{\beta+3}{4}$ and solutions will not explode. In the setting of the current paper, $f \equiv 0$ and the maximal allowable growth rate for $\sigma$ is \eqref{eq:sigma-growth}. 
	
The mild solution to \eqref{eq:SPDE} is defined to be the solution to the integral equation
\begin{equation} \label{eq:mild-intro}
	u(t,x) = \int_D G(t,x-y)u(0,y)dy + \int_0^t \int_D G(t-s,x-y) \sigma(u(s,y))W(dyds)
\end{equation}
where $G(t,x)$ is the fundamental solution to the heat equation on $D$ with periodic boundary conditions. Because $\sigma$ is locally Lipschitz continuous, standard localization arguments prove that there exists a unique \textit{local} mild solution to \eqref{eq:mild-intro} that exists until the explosion time
\begin{equation}
	\tau^\infty_\infty : =\sup_{n>0} \tau^\infty_n
\end{equation}
where
\begin{equation}
	\tau^\infty_n :=  \inf\left\{t>0: \sup_{x \in D} |u(t,x)| \geq n\right\}.
\end{equation}
A local mild solution \textit{explodes in finite time} if $\tau^\infty_\infty < \infty$. A local mild solution is called a \textit{global} mild solution if the solution never explodes with probability one, $\Pro(\tau^\infty_\infty=\infty)=1$.

The main result of this paper, Theorem \ref{thm:main} proves that when \eqref{eq:sigma-growth} is satisfied, the mild solution is global.
\begin{theorem} \label{thm:main}
	Assume that the initial data is $x \mapsto u(0,x)$ is a bounded, continuous, periodic function on $[-\pi,\pi]$ and assume that $\sigma$ is locally Lipschitz continuous and satisfies \eqref{eq:sigma-growth}. Then there exists a unique global mild solution to \eqref{eq:SPDE}.
\end{theorem}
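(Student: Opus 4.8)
The plan is to prove that the explosion time $\tau^\infty_\infty = \infty$ almost surely by deriving a priori moment bounds on the solution that hold uniformly in the localization parameter $n$. The standard strategy for non-explosion results of this type is to show that for each fixed time horizon $T>0$ and each $p$ in some suitable range, the stopped solution satisfies a bound of the form $\E\sup_{t\le T}\sup_{x\in D}|u(t\wedge\tau^\infty_n,x)|^p \le C_{T,p}$ with $C_{T,p}$ independent of $n$; letting $n\to\infty$ then forces $\Pro(\tau^\infty_\infty \le T)=0$ for every $T$, which gives the global existence claim. The main difficulty is that in the critical regime $\gamma=\frac32$ the naive Gronwall-type estimate fails: after applying the Burkholder--Davis--Gundy inequality and the $L^p$-boundedness of the stochastic convolution operator, the stochastic term produces a factor controlled by $\left(\int_0^t (t-s)^{-1/2}\E\sup_x|u(s,\cdot)|^{2\gamma}ds\right)$, and with $2\gamma = 3$ one cannot close a linear Gronwall loop because the effective power on $u$ exceeds the power one is estimating. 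This is precisely why $\gamma=\frac32$ is the critical threshold that Mueller's earlier arguments could not reach.

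First I would set up the regularized problem, working with the stopped solution $u(t\wedge\tau^\infty_n,x)$ and recording the factorized (or Da Prato--Kwapie\'n--Zabczyk) representation of the stochastic convolution so that I have access to sharp space-time regularity of the heat kernel $G$ on $D$. The key kernel estimates are $\int_D G(t,x-y)^2\,dy \sim t^{-1/2}$ and the Hölder-type smoothing bounds, which govern the critical exponent. Next I would pass from $L^p$ bounds at fixed space-time points to bounds on the supremum in $x$ using a Kolmogorov/Garsia--Rodemich--Rumsey continuity argument, and a factorization that converts the supremum in $t$ into an integrability condition, both of which require $p$ large enough to beat the kernel singularity but are otherwise standard. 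The crucial new ingredient, and the heart of the paper, must be a way to upgrade the failing polynomial Gronwall inequality: instead of tracking $\E\|u(t)\|^p$, I expect one should track an exponential-type functional, something like $\E\exp\!\big(\lambda \|u(t)\|^\theta\big)$ for a carefully chosen small exponent $\theta$ (or a moment bound with a slowly growing, e.g. logarithmically corrected, prefactor), and show this functional stays finite on $[0,T]$.

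The reason an exponential or super-moment functional is the natural device is that at the critical growth rate the self-improving feedback between $\sigma(u)\approx|u|^{3/2}$ and the heat-kernel smoothing produces at worst a borderline-divergent integral; the extra concavity or the logarithmic slack in a cleverly chosen Lyapunov functional is exactly what is needed to absorb the critical constant and still close the estimate. Concretely, I would aim to establish a differential or integral inequality for $\Phi(t):=\E\Psi(\|u(t\wedge\tau^\infty_n)\|)$ whose right-hand side is controlled by $\Phi(t)$ itself (plus a constant depending only on $T$, the initial data, and the constant $C$ from \eqref{eq:sigma-growth}), where the nonlinearity $\Psi$ is tuned so that the critical $t^{-1/2}$ singularity integrates against $\Psi$ to produce a genuinely sub-critical feedback. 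The hard step, which I expect to be the technical core of the argument, is verifying that this tailored inequality actually closes at $\gamma=\frac32$ and not merely for $\gamma<\frac32$ — in other words, quantifying the precise constant in the stochastic-convolution estimate and showing the critical constant is compensated by the chosen functional. Once $\Phi(t)$ is bounded uniformly in $n$ on $[0,T]$, a Chebyshev/Markov argument yields $\Pro(\tau^\infty_n \le T)\to 0$, uniqueness of the local solution is inherited from the local Lipschitz theory, and the theorem follows by taking $T\to\infty$ along a sequence.
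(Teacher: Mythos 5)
There is a genuine gap. Your plan correctly diagnoses why the naive Gronwall loop fails at $\gamma=\tfrac32$, but the remedy you propose --- tracking $\E\exp\bigl(\lambda\|u(t)\|^{\theta}\bigr)$ or a logarithmically corrected moment functional and hoping the critical constant gets absorbed --- is exactly the step you leave unverified (``the hard step \dots is verifying that this tailored inequality actually closes''), and it is not expected to close. With $\sigma$ of genuine polynomial growth $|u|^{3/2}$, exponential moments of the solution are not finite in general, and any functional $\Psi$ of the $L^\infty$ norm alone still feeds back through the stochastic term as a \emph{higher} moment of the same norm: the criticality is a scaling obstruction, not a constant or a logarithmic deficit, so no choice of $\Psi$ repairs a self-contained $L^\infty$-moment iteration. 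In short, your proposal is a plan whose core estimate is conjectural, and the conjectured mechanism is the wrong one.

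The paper's actual argument abandons the closed moment loop entirely and uses a different pair of ideas. First, since $u$ may change sign, it is dominated via Mueller's comparison principle by nonnegative processes $v$ and $v_-$ solving \eqref{eq:v} and \eqref{eq:v-} with a repelling drift $f(v)=v^{-\alpha}$, so it suffices to show $v$ does not explode. Second --- and this is the novel ingredient at criticality --- for a nonnegative solution the spatial integral $|v(t)|_{L^1}$ is a submartingale (the Laplacian integrates to zero), so Doob's inequality controls the $L^1$ norm uniformly in the $L^\infty$ cutoff $n$, and an It\^o-formula argument (Lemma \ref{lem:L1}) bounds the expected quadratic variation
\begin{equation*}
	\E \int_0^{\tau^1_M \wedge \tau^{\inf}_\e} \int_D |\sigma(v(s,y))|^2\,dy\,ds \leq M^2 ,
\end{equation*}
again independently of $n$. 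Non-explosion then follows from a doubling-time argument: with stopping times $\rho_n$ at which $|v|_{L^\infty}$ doubles or halves, each doubling from a high level $2^m$ forces the stochastic convolution to exceed $2^{m-2}$ within a short window $T_m \sim M^2 2^{-2m}$ (since the heat flow crushes the $L^1$-bounded initial profile below $2^{m-3}$ by time $T_m$), and the sharp convolution estimate of Theorem \ref{thm:stoch-conv} --- with the $p$-th power of $\varphi$ \emph{inside} the space-time integral, not a sup --- bounds that probability by $C_p M^{p/2-3}$ times the quadratic variation accumulated on $[\rho_n,\rho_{n+1}]$. Summing over $n$ and $m$ and invoking the $M^2$ bound above, Borel--Cantelli yields only finitely many doublings almost surely. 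It is precisely the interplay between the $L^1$ martingale structure and the quadratic-variation bookkeeping that makes the exponent come out exactly summable at $\gamma=\tfrac32$; no analogue of this appears in your proposal.
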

	
The method of proof is inspired by \cite{mueller-1991,mueller-1998}, but a new strategy is needed to prove non-explosion in the critical $\gamma =\frac{3}{2}$ setting. The first step is to prove that the $L^1$ norm of the solutions cannot explode. The fact that the $L^1$ norm cannot explode is easiest to see in the special case where $u(t,x)\geq0$ for all $t>0$ and all $x \in D$. Imposing the additional assumptions that $\sigma(0)=0$ and $u(0,x)\geq 0$, for example, would imply that $u(t,x)\geq0$ for all $t>0$ with probability one because of the comparison principle \cite{mueller-1991-support,kotelenez-1992}. In the case of a positive solution, formally integrating mild solutions in space indicates that 
\begin{equation}
	|u(t)|_{L^1} = \int_D u(t,x)dx = \int_D u(0,x)dx + \int_0^t \int_D \sigma(u(s,x))W(dsdx)
\end{equation}
is a nonnegative one-dimensional martinagle and therefore cannot explode in finite time. This argument can be made rigorous with stopping times. 

In the more general setting of this paper, where solutions $u(t,x)$ may take both  positive and negative values, we follow the ideas of \cite{mueller-1998} to construct nonnegative processes $v(t,x)$ and $v_-(t,x)$ that almost surely dominate $u(t,x)$ in the sense that 
\begin{equation}
	-v_-(t,x) \leq u(t,x) \leq v(t,x).
\end{equation}
Specifically, let $\alpha>3$ and let $f(u) = u^{-\alpha}$. Let $v(t,x)$ be the mild solution to
\begin{equation}
	\frac{\partial v}{\partial t} = \Delta v(t,x) + f(v(t,x)) + \sigma(v(t,x))\dot{W}(t,x)
\end{equation}
with initial data $v(0,x) = \max\{u(0,x),1\}$. Corollary 1.1 of \cite{mueller-1998} proves that solutions $v(t,x)$ remain nonnegative. The comparison principle of \cite[Theorem 2.5]{kotelenez-1992} proves that $u(t,x) \leq v(t,x)$ with probability one. $v_-(t,x)$ is constructed similarly. Then if we can prove that $v(t,x)$ and $v_-(t,x)$ do not explode to $+\infty$ in finite time, then $u(t,x)$ cannot explode in finite time either. 

We construct several stopping times to analyze these solutions. 
for any $n \in \mathbb{N}$ we define the $L^\infty$ stopping times
\begin{align}
	&\tau^\infty_n = \inf\{t>0: \sup_{x \in D} v(t,x) \geq n\},\\
	&\tau^\infty_\infty = \sup_n \tau^\infty_n.
\end{align}
The solution explodes in finite time if and only if $\tau^\infty_\infty <\infty$. Therefore, the goal of this paper is to prove that $\Pro(\tau^\infty_\infty = \infty) =1$.
Becuase $f$ is unbounded near $0$, we also need to define  the infimum stopping times for any $\e>0$,
\begin{equation}\label{eq:tau-inf-v}
	\tau^{\inf}_\e = \inf\left\{t\in [0,\tau^\infty_\infty): \inf_{x \in D} v(t,x) \leq \e \right\}
\end{equation}
Because $f(u)$ is Lipschitz continuous on $[\e,\infty)$ for any $\e>0$ and $\sigma(u)$ is Lipschitz continuous for $u \in [0,n]$ for any $n>0$, there exists a local mild solution for $v(t,x)$ until the time $\tau^\infty_\infty \wedge \tau^{\inf}_0$ where $\wedge$ denotes the minimum.

Corollary 1.1 of \cite{mueller-1998} proves that $v(t,x)$ never hits zero. Specifically, for any $T>0$,
\begin{equation}
	\lim_{\e \to 0} \Pro(\tau^{\inf}_\e \leq T \wedge \tau^\infty_\infty) = 0.
\end{equation}

For $M>0$, we define the $L^1$ stopping times
\begin{equation} \label{eq:tau1-v}
	\tau^1_M := \inf\{t \in [0, \tau^\infty_\infty): |v(t)|_{L^1} >M\}
\end{equation} 
%
%
%
and we prove that  the $L^1$ norm $\int_D v(t \wedge \tau^{\inf}_\e \wedge \tau^\infty_n,x)dx$ is a submartingale.
Using Doob's submartingale inequality we can prove that for any $T>0$ and $\e>0$ the $L^1$ norm cannot explode before $T \wedge \tau^{\inf}_\e$. The estimates are independent of $n$.

The novel observation, which is necessary to extend Mueller's results to the critical case where $\gamma = \frac{3}{2}$, is that we can show that the expected value of the \textit{quadratic variation}  of the $L^1$ norm is also bounded in a way that is independent of $n$. We prove in Lemma \ref{lem:L1} that 
\begin{align} \label{eq:quad-var-intro}
	&\E \int_0^{\tau^{\inf}_\e \wedge \tau^1_M } (\sigma(v(s,y)))^2dyds \leq M^2,
\end{align} 
an estimate that is independent of $n$ and $\e$.

In Section \ref{S:moment}, we prove an improved $L^\infty$ moment bound on the stochastic convolution, inspired by \cite{ch-2023}, which may be of independent interest.
\begin{theorem} \label{thm:stoch-conv}
	Let $p>6$. Assume that $\varphi(t,x)$ is an adapted random field such that
	\begin{equation}
		\E \int_0^T \int_D|\varphi(t,x)|^pdxdt < +\infty.
	\end{equation}
	Define the stochastic convolution
	\begin{equation}
		Z^\varphi(t,x) = \int_0^t \int_D G(t-s,x,y) \varphi(s,y)W(dyds).
	\end{equation}
	For any $p>6$ there exists $C_p>0$, independent of $T>0$ and $L>0$, such that
	\begin{equation} \label{eq:Linfty-moment}
		\E \sup_{t \in [0,T]} \sup_{x \in D} |Z^\varphi(t,x)|^p
		\leq C_p T^{\frac{p}{4} - \frac{3}{2}} \E\int_{0}^{T} \int_{D} |\varphi(s,y)|^p dy ds.
	\end{equation}
\end{theorem}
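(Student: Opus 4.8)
The plan is to prove Theorem~\ref{thm:stoch-conv} by the stochastic factorization method of Da Prato--Kwapie\'n--Zabczyk, which converts a spatial-supremum estimate into a deterministic convolution bound applied to a random field whose $p$-th moments are controlled by the Burkholder--Davis--Gundy inequality. Fix $\alpha \in (0,1)$ to be optimized later and set $p' = \frac{p}{p-1}$. Using the Beta-function identity $\int_r^t (t-s)^{\alpha-1}(s-r)^{-\alpha}\,ds = \frac{\pi}{\sin(\pi\alpha)}$ together with the semigroup (Chapman--Kolmogorov) property of $G$ and a stochastic Fubini theorem (justified by the hypothesis $\E\int_0^T\int_D |\varphi|^p < \infty$), I would write
\[
	Z^\varphi(t,x) = \frac{\sin(\pi\alpha)}{\pi} \int_0^t \int_D (t-s)^{\alpha-1} G(t-s,x,y)\, Y_\alpha(s,y)\, dy\, ds,
\]
where the auxiliary field is
\[
	Y_\alpha(s,y) = \int_0^s \int_D (s-r)^{-\alpha} G(s-r,y,z)\, \varphi(r,z)\, W(dz\, dr).
\]

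The first main step is a deterministic bound. Since the representation above is a convolution of $Y_\alpha$ against the kernel $(t-s)^{\alpha-1}G(t-s,x,y)$, H\"older's inequality in $(s,y)$ with exponents $p$ and $p'$ gives, uniformly in $x$ and for all $t \le T$,
\[
	\sup_{x \in D} |Z^\varphi(t,x)|
	\leq C_\alpha \left( \int_0^T \int_D |Y_\alpha(s,y)|^p\, dy\, ds \right)^{1/p}
	\left( \sup_{x} \int_0^T \int_D \big[ (t-s)^{\alpha-1} G(t-s,x,y) \big]^{p'}\, dy\, ds \right)^{1/p'}.
\]
The deterministic factor is controlled by the heat-kernel estimate $\int_D G(\tau,x,y)^{q}\, dy \leq C_q \tau^{(1-q)/2}$ (obtained by comparison of the periodic kernel with the Gaussian kernel on $\R$ at the relevant time scales), which reduces the inner integral to $\int_0^t (t-s)^{(\alpha-1)p' + (1-p')/2}\, ds$. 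This converges near $s=t$ precisely when $\alpha > \frac{3}{2p}$, and a short computation shows the factor scales like $t^{\alpha - 3/(2p)}$ after taking the $\frac{1}{p'}$ power.

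The second main step bounds the moments of $Y_\alpha$. For fixed $(s,y)$, $Y_\alpha(s,y)$ is a Walsh stochastic integral, so Burkholder--Davis--Gundy yields $\E|Y_\alpha(s,y)|^p \leq C_p\, \E\big( \int_0^s\int_D (s-r)^{-2\alpha} G(s-r,y,z)^2 \varphi(r,z)^2\, dz\, dr\big)^{p/2}$. Writing $\Lambda(s) = \int_0^s (s-r)^{-2\alpha}\big(\int_D G(s-r,y,z)^2\,dz\big)\,dr$ and using $\int_D G(\tau,y,z)^2\,dz \leq C\tau^{-1/2}$, the weight is integrable exactly when $\alpha < \frac14$, and then $\Lambda(s) \lesssim s^{1/2 - 2\alpha}$. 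A Jensen/H\"older step against the normalized kernel measure extracts $\varphi^2 \to |\varphi|^p$ at the cost of a power of $\Lambda$; integrating the resulting kernel in $y$ and then in $s$ (via Fubini) gives $\E\int_0^T\int_D |Y_\alpha|^p \leq C_p\, T^{(1/2-2\alpha)p/2}\, \E\int_0^T\int_D |\varphi|^p$, i.e.\ a contribution scaling like $T^{1/4 - \alpha}$ to the $\frac1p$ power. The two constraints $\frac{3}{2p} < \alpha < \frac14$ can be met simultaneously if and only if $p > 6$, which is exactly the hypothesis; this is where the threshold enters.

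Combining the two steps, the exponents of $T$ add as $(\alpha - \tfrac{3}{2p}) + (\tfrac14 - \alpha) = \tfrac14 - \tfrac{3}{2p}$, so the auxiliary parameter $\alpha$ cancels and raising to the $p$-th power produces the claimed factor $T^{p/4 - 3/2}$ with a constant $C_p$ depending only on $p$ (through the Beta function, the BDG constant, and the kernel constants). I expect the main obstacle to be the careful bookkeeping of the heat-kernel $L^q$-in-space estimates and the verification that the relevant time integrals converge uniformly in $x$, since the periodic kernel does not decay to zero and one must confirm that the power-law bounds $\int_D G(\tau,x,y)^q\,dy \lesssim \tau^{(1-q)/2}$ control the relevant time scales so that the constant is genuinely independent of the parameters; the stochastic Fubini and the Jensen step are routine by comparison.
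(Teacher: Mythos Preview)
Your proposal is correct and follows essentially the same route as the paper: the Da~Prato--Zabczyk factorization with parameter $\alpha\in(\tfrac{3}{2p},\tfrac14)$, H\"older on the deterministic outer convolution using $\int_D G^{q}\,dy\lesssim \tau^{(1-q)/2}$, and BDG on the inner stochastic integral, with the exponents of $T$ combining to $\tfrac{p}{4}-\tfrac{3}{2}$ independently of $\alpha$. The only cosmetic difference is that after BDG the paper invokes Young's convolution inequality in $L^{p/2}$ to pass from $\big(\int G^2(s-r)^{-2\alpha}|\varphi|^2\big)^{p/2}$ to $\int|\varphi|^p$, whereas you do the equivalent Jensen step against the normalized kernel measure; both yield the same bound $\E\int_0^T\int_D|Y_\alpha|^p\le C_p T^{p/4-p\alpha}\E\int_0^T\int_D|\varphi|^p$.
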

We remark  that in the case where there exists $L>0$ such that
\begin{equation}
	\Pro\left(\sup_{t \in [0,T]} \sup_{x \in D} |\varphi(t,x)| \leq L\right) = 1,
\end{equation}
an obvious upper bound of \eqref{eq:Linfty-moment} is 
\begin{equation}
	C_p T^{\frac{p}{4} - \frac{3}{2}}  \E\int_{0}^{T} \int_{D} |\varphi(s,y)|^p dy ds \leq C_pL^{p} T^{\frac{p}{4} - \frac{1}{2}}.
\end{equation} 
This looser upper bound can be used to prove non-explosion in the subcritical $\gamma< \frac{3}{2}$ regime. Unfortunately,
this looser bound will not be helpful when we prove the main non-explosion result in the critical setting and we will need the tighter upper bound \eqref{eq:Linfty-moment}.

We then define a sequence of stopping times $\rho_n$ that keep track of when the $|v(t)|_{L^\infty}$ doubles or halves. The stopping times are defined so that $|v(\rho_n)|_{L^\infty} = 2^m$ for some $m \in \mathbb{N}$. Using all of the estimates mentioned above we can prove that for any $\e>0$ and $ M>0$, the $L^\infty$ norm $|v(\rho_n)|_{L^\infty}$ can only double a finite number of times before the time $ \tau^{\inf}_\e \wedge \tau^1_M$. This estimate relies on estimates of the quadratic variation of the $L^1$ norm \eqref{eq:quad-var-intro}, which were not required in the subcritical setting. Therefore, for any $\e>0$ and $M>0$, the explosion time
\begin{equation}
	\tau^\infty_\infty> ( \tau^{\inf}_\e \wedge \tau^1_M )\text{ with probability one.}
\end{equation}
Taking the limit as $M \to \infty$ and  $\e \to 0$, we can prove that explosion cannot occur in finite time.

In Section \ref{S:notation} we introduce some notations and recall the properties of the heat kernel. In Section \ref{S:comparison} we introduce the positive solutions $v(t,x)$ and $v_-(t,x)$ and prove that they dominate $u(t,x)$. In Section \ref{S:L1} we prove that the $L^1$ norm of the solutions its quadratic variation remain finite in a way that does not depend on the $L^\infty$ norm of the solutions. In Section \ref{S:moment} we prove the stochastic convolution moment bound Theorem \ref{thm:stoch-conv}. Finally in Section \ref{S:non-explosion} we prove that $v(t,x)$ cannot explode in finite time.

\section{Some notations and definitions} \label{S:notation}
The spatial domain $D=[-\pi,\pi]$. 

Let $L^p := L^p(D)$, $p\geq 1$ denote the standard $L^p$ spaces on $D$ endowed with the norms
\begin{align}
	&|\varphi|_{L^p} := \left(\int_D|\varphi(y)|^p\right)^{\frac{1}{p}},  \ \ \  p \in [1,\infty), \\
	&|\varphi|_{L^\infty} :=  \sup_{x \in D} |\varphi(x)|.
\end{align}

The driving noise $\dot{W}$ is a space-time white noise defined on a filtered probability space $(\Omega, \mathcal{F}, \mathcal{F}_t, \Pro)$. This means that for any non-random $\psi, \varphi \in L^2([0,T]\times D)$, 
\[\int_0^T \int_D \varphi(s,y)W(dyds) \text{ and } \int_0^T \int_D \psi(s,y)W(dyds) \]
are mean-zero Gaussian random variables with covariance

\begin{align}
	&\E \left( \int_0^T \int_D \varphi(s,y)W(dyds) \right)\left(\int_0^T \int_D \psi(s,y)W(dyds) \right) \nonumber\\
	&= \int_0^T \int_D \varphi(s,y)\psi(s,y)dyds.
\end{align}

If $\varphi(t,x)$ is an $\mathcal{F}_t$-adapted process then the stochastic integral $$\int_0^t \int_D \varphi(s,y)W(dyds)$$ is an Ito-Walsh integral \cite{walsh-1986}.

The heat kernel on $D$ with periodic boundary is defined to be
\begin{equation} \label{eq:kernel}
	G(t,x) = \frac{1}{\sqrt{2\pi}} + \sum_{k=1}^\infty \sqrt{\frac{2}{\pi}} e^{-|k|^2t } \cos(kx).
\end{equation}

For any $\varphi \in L^2(D)$, $h(t,x) = \int_D G(t,x-y)\varphi(y)dy$ solves the linear heat equation $ \frac{\partial h}{\partial t} = \Delta h$ with initial data $h(0,x) = \varphi(x)$.

\begin{lemma} \label{lem:kernel-estimates}
  The heat kernel has the following properties.
  \begin{enumerate}
  	\item The heat kernel is nonnegative: $G(t,x) \geq0$ for all $t>0$, $x \in D$.
  	\item $|G(t,\cdot)|_{L^1} = \sqrt{2\pi}$.
  	\item There exists $C>0$ such that for any $t>0$,
  	\begin{equation}
  		|G(t,\cdot)|_{L^\infty} \leq C t^{-\frac{1}{2}},
  	\end{equation}
	\end{enumerate}
\end{lemma}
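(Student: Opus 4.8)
The plan is to reduce all three properties to the \emph{periodization identity}
$$G(t,x) = \sqrt{2\pi}\sum_{n\in\mathbb{Z}} \frac{1}{\sqrt{4\pi t}}\, e^{-\frac{(x+2\pi n)^2}{4t}},$$
which rewrites the Fourier series \eqref{eq:kernel} as a superposition of Gaussian heat kernels on $\R$ wrapped around the circle. First I would establish this identity by Poisson summation. Writing $\Phi_t(y) = \frac{1}{\sqrt{4\pi t}}e^{-y^2/(4t)}$ for the Gaussian heat kernel on $\R$ and $q_t(x) = \sum_{n} \Phi_t(x + 2\pi n)$ for its $2\pi$-periodization, the defining series for $q_t$ converges absolutely and uniformly on $D$ for each fixed $t>0$ (its tail is dominated by a convergent Gaussian sum), so $q_t$ is a continuous $2\pi$-periodic function. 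Its Fourier coefficients are $\int_{-\pi}^{\pi} q_t(x)e^{-ikx}\,dx = \int_{\R}\Phi_t(y)e^{-iky}\,dy = e^{-k^2 t}$, by the Gaussian Fourier transform. Comparing with the coefficients read off from \eqref{eq:kernel}, uniqueness of Fourier series yields $G(t,\cdot) = \sqrt{2\pi}\,q_t$, which is the claimed identity.

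Property (1) is then immediate: $q_t$ is a sum of strictly positive terms, so $G(t,x) = \sqrt{2\pi}\,q_t(x) > 0$. Property (2) follows from nonnegativity, since $|G(t,\cdot)|_{L^1} = \int_D G(t,x)\,dx$; integrating \eqref{eq:kernel} term by term, every cosine integrates to zero over $D$ while the constant term contributes $\frac{1}{\sqrt{2\pi}}\cdot 2\pi = \sqrt{2\pi}$ (equivalently, integrate the periodization and use $\int_{\R}\Phi_t = 1$). For property (3), the $t^{-1/2}$ singularity is isolated by the periodization: the $n=0$ term is exactly $\sqrt{2\pi}\,\Phi_t(x) \leq \sqrt{2\pi}\,\Phi_t(0) = (2t)^{-1/2}$, while for $x\in D$ and $n\neq 0$ one has $|x+2\pi n|\geq \pi|n|$, so the remaining terms are dominated by a convergent Gaussian sum that stays bounded for $t$ in any bounded range. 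Alternatively, one may bound directly from \eqref{eq:kernel} via the integral comparison $\sum_{k\geq 1}e^{-k^2 t}\leq \int_0^\infty e^{-s^2 t}\,ds = \tfrac12\sqrt{\pi/t}$, giving $|G(t,\cdot)|_{L^\infty}\leq \frac{1}{\sqrt{2\pi}} + \frac{1}{\sqrt{2t}} \leq C t^{-1/2}$ on any bounded time interval. The short-time singularity is the relevant content; note the crude termwise bound $\sum_{k\geq1} e^{-k^2 t}\lesssim \sum_{k\geq1}e^{-kt}\sim t^{-1}$ is too lossy, and it is precisely the integral (or Gaussian) comparison that recovers the correct exponent $\tfrac12$.

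The main obstacle is property (1). The Fourier series \eqref{eq:kernel} has oscillating cosine terms with no manifest sign, so nonnegativity cannot be read off directly; the Poisson summation identity is exactly the device that trades the sign-ambiguous series for a manifestly positive sum of Gaussians. Once that representation is in hand the other two properties are routine. A more functional-analytic alternative for (1) would be to invoke the maximum principle, namely that the semigroup $e^{t\Delta}$ on $D$ is positivity preserving and hence has a nonnegative integral kernel; but the periodization supplies the explicit positive representation for free and is in any case the cleanest route to the sharp $t^{-1/2}$ bound in (3).
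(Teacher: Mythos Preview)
Your proof is correct; the main divergence from the paper is in property (1). The paper argues nonnegativity via the comparison (maximum) principle: for any nonnegative periodic initial datum $\varphi$, the solution $h(t,x)=\int_D G(t,x-y)\varphi(y)\,dy$ stays nonnegative, and since this holds for every such $\varphi$ the kernel itself must be nonnegative. You instead establish the periodization identity $G(t,x)=\sqrt{2\pi}\sum_{n\in\mathbb{Z}}\Phi_t(x+2\pi n)$ by Poisson summation and read positivity off from the manifestly positive Gaussian sum; you even mention the maximum-principle route as an alternative at the end. Your approach buys an explicit positive representation (and a second proof of the $t^{-1/2}$ bound), while the paper's argument is more portable to other domains and boundary conditions where no nice periodization is available. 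For properties (2) and (3) you are essentially doing what the paper does: integrating the Fourier series term by term for (2), and for (3) your alternative via $\sum_{k\geq 1}e^{-k^2t}\leq\int_0^\infty e^{-s^2t}\,ds$ is exactly the paper's computation. Your caveat that the inequality $\frac{1}{\sqrt{2\pi}}+\frac{1}{\sqrt{2t}}\leq Ct^{-1/2}$ holds only on bounded time intervals is in fact more careful than the paper, which asserts the bound for all $t>0$; this is harmless since only short times matter in the applications.
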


\begin{proof}
	The positivity of the heat kernel is a consequence of the comparison principle for linear heat equations. Specifically, let $\varphi: D \to \mathbb{R}$ be any nonnegative periodic function. $h(t,x) = \int_D G(t,x-y)\varphi(y)dy$ solves the heat equation and therefore satisfies the comparison principle. Therefore $h(t,x) \geq 0$ for all $t>0$ and $x \in D$ because $\varphi(x)\geq 0$ for all $x \in D$. This is true for any nonnegative $\varphi$, implying that $G(t,x)\geq 0$.
	
	The $L^1$ norm claim can be calculated exactly because $G(t,x)$ is nonnegative and  $\int_{-\pi}^\pi \frac{1}{\sqrt{2\pi}} = \sqrt{2\pi}$ and $\int_{-\pi}^\pi \cos(kx)dx =0$ for $k \geq 1$.
	
	For the $L^\infty$ norm we notice that for any $t>0$ and  $x \in D$ 
	\begin{align}
		& |G(t,x)| \leq G(t,0) \leq\sqrt{\frac{2}{\pi}} + \frac{1}{\sqrt{\pi}}\sum_{k=1}^\infty e^{-|k|^2 t}  \nonumber\\
		&\leq  \sqrt{\frac{2}{\pi}}+ \sqrt{\frac{1}{\pi}}\int_0^\infty e^{-|x|^2t}dx \nonumber\\
		&\leq  \sqrt{\frac{2}{\pi}}+ \frac{1}{2}t^{-\frac{1}{2}}
		\nonumber\\
		&\leq C t^{-\frac{1}{2}}.
	\end{align}
\end{proof}

Throughout the paper we use the notation $a \wedge b = \min\{a,b\}$ and $C$ denotes an arbitrary constant whose value may change from line to line.

\section{Comparison to positive solutions} \label{S:comparison}
%
We follow the arguments of \cite{mueller-1998} to construct nonnegative stochastic processes that dominate $u(t,x)$. Specifically, let $f(u) = u^{-\alpha}$ for some $\alpha>3$.

Let $v(t,x)$ be the solution to
\begin{equation} \label{eq:v}
	\frac{\partial v}{\partial t} (t,x) = \Delta v(t,x) + f(v(t,x)) + \sigma(v(t,x))\dot{W}(t,x)
\end{equation}
with initial data
\begin{equation}
	v(0,x) = \max\{u(0,x),1\}
\end{equation}

and let $v_-(t,x)$ be the solution to
\begin{equation} \label{eq:v-}
	\frac{\partial v_-}{\partial t} (t,x) = \Delta v_-(t,x) + f(v_-(t,x)) + \sigma(-v_-(t,x))\dot{W}(t,x)
\end{equation}
with initial data
\begin{equation}
	v_-(0,x) = \max\{-u(0,x),1\}.
\end{equation}

$v(t,x)$ and $v_-(t,x)$ have the same properties. For this reason, we only prove results for $v(t,x)$ because the proofs for $v_-(t,x)$ are identical.

We now recall the standard arguments for the construction of the unique \textit{local mild solution} to \eqref{eq:v}. For any $\e>0$ define  
\begin{equation}
	f_\e(u) = (\max\{\e,u\})^{-\alpha}.
\end{equation}
Notice that for any $\e>0$, $f_\e$ is globally Lipschitz continuous. For any $n>0$ define
\begin{equation}
	\sigma_n(u) = \begin{cases}
		\sigma(-n) & \text{ if } u<-n \\
		\sigma(u) & \text{ if } u \in [-n,n] \\
		\sigma(n) & \text{ if } u> n
	\end{cases}.
\end{equation}
For each $n>0$, $\sigma_n$ is globally Lipschitz continuous. Therefore by standard arguments \cite{dalang-1999,cerrai-2003,walsh-1986} for any $\e>0$ and $n>0$ there exists a unique global mild solution $v_{\e,n}$ solving
\begin{align}
	v_{\e,n}(t,x) = &\int_D G(t,x-y)v(0,y)dy + \int_0^t \int_D G(t-s,x-y)f_\e(v_{\e,n}(s,y))dyds \nonumber\\
	&+ \int_0^t \int_D G(t-s,x-y) \sigma_n(v_{\e,n}(s,y))W(dyds)
\end{align}
where $G(t,x)$ is the heat kernel defined in \eqref{eq:kernel}. 

For any $0<\e<n$ define the stopping times
\begin{equation}
	\tilde{\tau}_{\e,n} : = \inf\left\{t>0: \inf_{x \in D}v_{\e,n}(t,x) < \e \text{ or } \sup_{x \in D}v_{\e,n} >n  \right\}
\end{equation}
For any $0< \e_2 < \e_1<n_1< n_2$, the functions $f_{\e_1}(u) = f_{\e_2}(u)$ and $\sigma_{n_1}(u) = \sigma_{n_2}(u)$ for all $ u \in [\e_1,n_1]$.
Therefore, the uniqueness of solutions implies that these solutions are \textit{consistent} in the sense that if $0< \e_2 < \e_1<n_1< n_2$ then
\begin{equation}
	v_{\e_1,n_1}(t,x) = v_{\e_2,n_2}(t,x) \text{ for all } x \in D \text{ and } t \in [0,\tilde{\tau}_{\e_1,n_1}].
\end{equation}
We can, therefore, uniquely define the unique local mild solution by 
\begin{equation}
	v(t,x) := v_{\e,n}(t,x) \text{ for all } x \in D \text{ and } t \in [0,\tilde{\tau}_{\e,n}]
\end{equation}
and the local mild solution is well defined for all $t \in [0,\sup_{0<\e<n} \tilde{\tau}_{\e,n}]$
and  solves the integral equation
\begin{align} \label{eq:v-mild}
	v(t,x) = &\int_D G(t,x-y)v(0,y)dy + \int_0^t \int_D G(t-s,x-y)f(v(s,y))dyds \nonumber\\
	&+ \int_0^t \int_D G(t-s,x-y)\sigma(v(s,y))W(dyds).
\end{align}
The construction of $v_-(t,x)$ is identical so we do not repeat the proof.

Define the infimum stopping times for $\e \in (0,1)$
\begin{align} \label{eq:tau-inf}
	&\tau^{\inf}_\e : = \inf\left\{t>0: \inf_{x \in D} v(t,x) < \e\right\}\\
	&\tau^{\inf}_{\e,-} : = \inf\left\{t>0: \inf_{x \in D} v_-(t,x)<\e\right\}
\end{align}
and the $L^\infty$ stopping times for $n>1$
\begin{align}
	&\tau^\infty_n: = \inf\left\{t>0: \sup_{x \in D} v(t,x) > n\right\} \label{eq:tau-inf-L}\\
	&\tau^\infty_\infty :  = \sup_{n>0} \tau^\infty_n \label{eq:tau-inf-inf}.
\end{align}

\begin{align}
	&\tau^\infty_{n,-}: = \inf\left\{t>0: \sup_{x \in D}v_-(t,x) > n\right\}\\
	&\tau^\infty_{\infty,-} :  = \sup_{n>0} \tau^\infty_{n,-}.
\end{align}

The comparison principle of \cite[Theorem 2.5]{kotelenez-1992} guarantees that the following holds.
\begin{proposition} \label{prop:comparison}
	With probability one
	\begin{equation}
		-v_-(t,x) \leq u(t,x) \leq v(t,x)
	\end{equation}
  for all $t \in [0, \tau^{\inf}_0 \wedge \tau^{\inf}_{0,-}\wedge  \tau^\infty_\infty \wedge \tau^\infty_{\infty,-}]$ and for all $x \in [-\pi,\pi]$.
\end{proposition}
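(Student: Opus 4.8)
The plan is to obtain both inequalities as applications of the comparison principle \cite[Theorem 2.5]{kotelenez-1992}, which compares two mild solutions driven by the \emph{same} white noise and sharing the \emph{same} diffusion coefficient, ordering them according to their drifts and initial data. Since that theorem is stated for globally Lipschitz coefficients, I would first apply it to the truncated equations used in the construction above, namely to $v_{\e,n}$ (with coefficients $f_\e,\sigma_n$) and to the corresponding truncation $u_n$ of $u$ obtained by replacing $\sigma$ with $\sigma_n$. Once the comparison is established for these globally Lipschitz approximations, the consistency of the truncated solutions (already exploited in the construction of the local mild solution) lets me pass to the limit and conclude the inequalities first on $[0,\tau^{\inf}_\e\wedge\tau^\infty_n]$ and then, letting $\e\to 0$ and $n\to\infty$, on the full interval $[0,\tau^{\inf}_0\wedge\tau^{\inf}_{0,-}\wedge\tau^\infty_\infty\wedge\tau^\infty_{\infty,-}]$.

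For the upper bound $u\le v$, the two processes $u$ and $v$ are driven by the same noise $W$ and have the same diffusion coefficient $\sigma$. On the interval $[0,\tau^\infty_\infty\wedge\tau^{\inf}_0)$ we have $v(t,x)>0$, so the extra drift of $v$ is $f(v)=v^{-\alpha}\ge 0$, which dominates the zero drift of $u$; moreover $v(0,x)=\max\{u(0,x),1\}\ge u(0,x)$. The comparison principle then yields $u\le v$ directly.

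The lower bound $-v_-\le u$ is where the construction of $v_-$ in \eqref{eq:v-} does its work. Setting $w:=-v_-$, a direct computation from the mild form \eqref{eq:v-mild} shows that $w$ satisfies a mild equation with initial data $w(0,\cdot)=\min\{u(0,\cdot),-1\}\le u(0,\cdot)$, nonpositive drift $-f(-w)=-(-w)^{-\alpha}\le 0$ (recall $-w=v_->0$ on the relevant interval), and diffusion coefficient $\sigma(w)$ driven by the same white noise that drives $u$. This last point is the crux: the appearance of $\sigma(-v_-)$ rather than $\sigma(v_-)$ in \eqref{eq:v-} is exactly what makes the noise response of $w=-v_-$ coincide with that of $u$, using that $\dot W$ and $-\dot W$ are both space-time white noises with the same law, so that the two processes can be realized as solutions of equations with a common diffusion coefficient against a common noise. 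With $w$ and $u$ thus matched --- same noise, same diffusion coefficient $\sigma$, drift of $w$ no larger than that of $u$, and $w(0,\cdot)\le u(0,\cdot)$ --- the comparison principle gives $w\le u$, that is $-v_-\le u$.

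The main obstacle is precisely this sign and noise-alignment step for the lower bound. Both $u$ and $-v_-$ must be exhibited as mild solutions of SPDEs sharing one driving noise and one diffusion coefficient before \cite[Theorem 2.5]{kotelenez-1992} can be invoked, and an honest proof must check that the transformation $w=-v_-$ together with the symmetry of white noise produces exactly this situation; otherwise, if the two processes responded to the noise with opposite signs, the difference $u-w$ would carry a nonvanishing martingale part where the processes meet and the comparison would fail. The remaining difficulty is purely bookkeeping: verifying that the positivity of $v$ and $v_-$ on $[0,\tau^{\inf}_0)$ and $[0,\tau^{\inf}_{0,-})$ guarantees the required sign of the drifts $f$, and that the truncation and limit procedure respects the stopping times so that both inequalities hold simultaneously up to $\tau^{\inf}_0\wedge\tau^{\inf}_{0,-}\wedge\tau^\infty_\infty\wedge\tau^\infty_{\infty,-}$.
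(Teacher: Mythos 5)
Your overall structure (localize to the globally Lipschitz truncations $f_\e,\sigma_n$, invoke \cite[Theorem 2.5]{kotelenez-1992}, then use consistency of the truncated solutions and let $\e\to0$, $n\to\infty$) is exactly the paper's proof, and your treatment of the upper bound $u\le v$ is correct and matches it. The gap is in the lower bound, at precisely the point you call the crux. From \eqref{eq:v-}, the process $w=-v_-$ satisfies
\begin{equation}
	dw = \bigl(\Delta w - f(-w)\bigr)\,dt - \sigma(w)\,dW,
\end{equation}
so against the noise $W$ that drives $u$, the diffusion coefficient of $w$ is $-\sigma(w)$, not $\sigma(w)$: this is exactly the opposite-sign situation you yourself identify as fatal, since the martingale part of $u-w$ is $(\sigma(u)+\sigma(w))\dot W$, which does not vanish where $u=w$. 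Invoking that $\dot W$ and $-\dot W$ have the same law cannot repair this: a comparison theorem is a pathwise statement about two equations driven by the \emph{identical} noise realization, and equality in law of $W$ and $-W$ does not produce a coupling in which $u$ and $w$ share one noise and one coefficient --- rewriting $w$'s equation against $-W$ merely transfers the sign mismatch onto $u$ instead of removing it.

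The correct repair is to change the pathwise definition (not the law) of $v_-$: let $v_-$ solve the analogue of \eqref{eq:v-} with diffusion term $-\sigma(-v_-)\dot W$, i.e. drive the paper's equation by $-W$. This process has the same law as the one in \eqref{eq:v-} (because $-W$ is again a space-time white noise), so Proposition \ref{prop:v-positive} and Theorem \ref{thm:v-no-explode}, which depend only on the law of $v_-$, still apply to it; but now $w=-v_-$ satisfies $dw=(\Delta w - f(-w))\,dt + \sigma(w)\,dW$ with the \emph{same} $W$ and the \emph{same} coefficient $\sigma$ as $u$, and the comparison theorem gives $w\le u$ pathwise, with drift $-f(-w)\le 0$ and $w(0,\cdot)\le u(0,\cdot)$ exactly as you argue. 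To be fair, the paper's own one-line proof cites Kotelenez without addressing this sign/noise alignment at all, so your write-up is more explicit than the paper about where the difficulty lies; but the law-symmetry argument you offer to resolve it is a genuine gap, while the sign-corrected definition of $v_-$ closes it.
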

\begin{proof}
	The comparison principle of \cite{kotelenez-1992} is stated for heat equations with globally Lipschitz continuous $f(v)$ and $\sigma(v)$. But $f(v)$ and $\sigma(v)$ are both Lipschitz continuous for $v \in [\e,n]$ for any $0< \e< n< \infty$. Therefore, with probability one,
	\begin{equation}
		-v_-(t,x) \leq u(t,x) \leq v(t,x)
	\end{equation}
	for all $t \in [0, \tau^{\inf}_\e \wedge \tau^{\inf}_{\e,-} \wedge\tau^\infty_n \wedge\tau^\infty_{n,-}]$. Taking the limit as $\e \to 0$ and $n \to \infty$ proves the result.
\end{proof}

Corollary 1.1 of \cite{mueller-1998} proves that the $f(u) = u^{-\alpha}$ forcing and the nonnegative initial data of $v(0,x)$ prevent $v(t,x)$ from becoming negative. We restate this result below. 


\begin{proposition}[Corollary 1.1 of \cite{mueller-1998}] \label{prop:v-positive}
	For any $T>0$ 
	\begin{equation}
		\lim_{\e \to 0}\Pro\left(\inf_{t \in [0,T\wedge \tau^\infty_\infty]}\inf_{x\in D} v(t,x)<\e \right)=0.
	\end{equation}
\end{proposition}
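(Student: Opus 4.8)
The plan is to establish the quantitative bound that $\Pro(\tau^{\inf}_\e \le T \wedge \tau^\infty_\infty)$ tends to $0$ as $\e \to 0$. Since the regularized solutions $v_{\e',n}$ are consistent, it suffices to bound $\Pro(\tau^{\inf}_\e \le T \wedge \tau^\infty_n)$ uniformly in $n$ and then let $n \to \infty$. Before $\tau^\infty_n$ the coefficient $\sigma(v)$ is bounded (by continuity of $\sigma$ on $[0,n]$), so the stochastic convolution enjoys the $L^\infty$ moment control of Theorem~\ref{thm:stoch-conv}, while the drift $f(v) = v^{-\alpha}$ is strongly repulsive as $v \downarrow 0$. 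The essential point is that for $\alpha > 3$ this repulsion dominates the noise.

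The guiding heuristic, which also explains the threshold $\alpha > 3$, is a parabolic scaling balance. Because the stochastic convolution over a time window of length $\delta$ fluctuates by order $\delta^{1/4}$, the noise can drive $v$ downward by an amount $\e$ only on a time scale $\delta \approx \e^4$. Over that same window the repulsive drift contributes an upward push of order $\delta\, v^{-\alpha} \approx \e^{4}\e^{-\alpha} = \e^{4-\alpha}$, and $\e^{4-\alpha}$ is much larger than $\e$ exactly when $\alpha > 3$. Thus for $\alpha > 3$ the drift overwhelms the downward excursions of the noise near level $\e$, so $v$ cannot reach $0$; the plan is to convert this balance into a supermartingale estimate.

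Concretely I would introduce a Lyapunov functional $\Phi(v) = v^{-q}$ for suitably large $q > 0$ and study $t \mapsto \int_D \Phi(v(t,x))\,dx$. A formal Itô computation produces a drift term $-q\int_D v^{-q-1-\alpha}\,dx$ from $f$, a nonpositive term $-q(q+1)\int_D v^{-q-2}|\nabla v|^2\,dx$ from the Laplacian after integration by parts, and a positive Itô correction from the noise of size comparable to $\int_D v^{-q-2}\sigma(v)^2\,dx$. Since $\sigma$ is bounded near $0$, the repulsive term $v^{-q-1-\alpha}$ dominates the correction as $v \downarrow 0$, which would render the functional a supermartingale up to $\tau^{\inf}_\e \wedge \tau^\infty_n$ and keep $\E \int_D \Phi(v)\,dx$ bounded. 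One then relates $\{\tau^{\inf}_\e \le T \wedge \tau^\infty_n\}$ to the event that $\sup_{t \le T}\int_D \Phi(v)\,dx$ is large: by the spatial Hölder continuity of $v$, a dip to level $\e$ at one point forces $v \le 2\e$ on a (random) spatial interval, so the functional is large, of order at least $\e^{2-q}$, and a Doob/Markov inequality gives $\Pro(\tau^{\inf}_\e \le T \wedge \tau^\infty_n) \le C\, \e^{q-2}\, \E\int_D \Phi(v)\,dx \to 0$, uniformly in $n$.

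The main obstacle is precisely this noise correction. Under space-time white noise the pointwise quadratic variation is infinite, so $\int_D \Phi(v)\,dx$ is not literally a semimartingale and the formal correction $\int_D v^{-q-2}\sigma(v)^2\,dx$ carries a divergent diagonal-covariance factor; the naive pointwise domination by $v^{-q-1-\alpha}$ does not survive the white-noise limit, and indeed it would only require $\alpha > 1$ rather than $\alpha > 3$. The delicate part is therefore to run the estimate on the spatially regularized solutions (or a Galerkin truncation), where the functional is a genuine semimartingale, and to show that the gain from the repulsive drift persists as the regularization is removed, which holds precisely because of the parabolic scaling above and the hypothesis $\alpha > 3$. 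Granting this, sending $\e \to 0$ with the uniformity in $n$ finishes the proof.
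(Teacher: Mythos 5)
The paper itself does not prove this proposition: it is imported verbatim as Corollary 1.1 of Mueller's 1998 paper and used as a black box, so the honest comparison is between your sketch and Mueller's argument. Your scaling heuristic is the right one --- over a window of length $\delta$ the stochastic convolution moves by order $\delta^{1/4}$, so a dip to level $\varepsilon$ needs time $\delta \approx \varepsilon^4$, during which the drift pushes up by $\delta\,\varepsilon^{-\alpha} = \varepsilon^{4-\alpha} \gg \varepsilon$ precisely when $\alpha>3$ --- and this balance is indeed the mechanism behind the cited result, which Mueller establishes by a scale-by-scale analysis of downward excursions in the mild formulation, controlling the noise on the natural time scale of each level (a descending analogue of the doubling argument in Section 5 of this paper).

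The gap is that your proposal defers exactly the step that constitutes the proof. The Lyapunov functional $\int_D v^{-q}\,dx$ cannot be repaired by the fix you suggest: under space-time white noise the It\^o correction is not ``comparable to $\int_D v^{-q-2}\sigma(v)^2\,dx$'' but carries the on-diagonal covariance factor, which is infinite; on any spatial regularization the correction is finite but \emph{diverges} as the regularization is removed, while the repulsive drift term $-q\int_D v^{-q-1-\alpha}\,dx$ stays finite. A finite negative term cannot absorb a divergent positive one, so the supermartingale property is destroyed in the limit no matter how large $\alpha$ is. Your own observation that the naive computation would only require $\alpha>1$ is the symptom: the functional inequality is pointwise in time and never sees the $\delta^{1/4}$ time-modulus of the noise, which is the only place the exponent $3$ can come from. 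Writing ``show that the gain from the repulsive drift persists as the regularization is removed'' is a restatement of the difficulty, not an argument; to convert the heuristic into a proof one must abandon the It\^o/Lyapunov route and instead estimate, level by level ($2^{-m} \to 2^{-m-1}$), the probability that the stochastic convolution beats the drift on the excursion time scale, then sum over scales. (Secondarily, your Markov-inequality step also relies on a spatial H\"older bound for $v$ whose constant is random and degenerates as $n\to\infty$, so the claimed uniformity in $n$ is not established either.)
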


We will prove that under the assumptions of Theorem \ref{thm:main}, the solutions of $v(t,x)$ cannot explode in finite time. Because $v_-(t,x)$ satisfies the same assumptions as $v(t,x)$, $v_-(t,x)$ cannot explode in finite time either.  

\begin{theorem} \label{thm:v-no-explode}
	Let $v(t,x)$, $v_-(t,x)$ be the local mild solutions to \eqref{eq:v} and \eqref{eq:v-}. Then both $\tau^\infty_\infty = \infty$ and $\tau^\infty_{\infty,-}=\infty$ with probability one.
\end{theorem}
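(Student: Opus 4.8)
The plan is to prove non‑explosion for $v$ only; the argument for $v_-$ is identical. Fix $T>0$, $\e\in(0,1)$ and $M>0$ and write $\tau=\tau^{\inf}_\e\wedge\tau^1_M\wedge T$. I would first reduce the theorem to the single assertion that $\Pro(\tau^\infty_\infty\le\tau)=0$ for every $\e,M,T$. Indeed, on $\{\tau^\infty_\infty\le T\}$ at least one of $\{\tau^{\inf}_\e\le T\wedge\tau^\infty_\infty\}$, $\{\tau^1_M\le T\wedge\tau^\infty_\infty\}$, $\{\tau^\infty_\infty\le\tau\}$ must occur, so
\begin{equation*}
\Pro(\tau^\infty_\infty\le T)\le\Pro(\tau^{\inf}_\e\le T\wedge\tau^\infty_\infty)+\Pro(\tau^1_M\le T\wedge\tau^\infty_\infty)+\Pro(\tau^\infty_\infty\le\tau).
\end{equation*}
By Lemma \ref{lem:L1} the stopped $L^1$ norm is a submartingale, so Doob's inequality sends the second term to $0$ as $M\to\infty$, and Proposition \ref{prop:v-positive} sends the first term to $0$ as $\e\to0$. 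Hence, once the third term is shown to vanish, letting $M\to\infty$ and then $\e\to0$ gives $\Pro(\tau^\infty_\infty\le T)=0$ for all $T$, i.e. $\tau^\infty_\infty=\infty$ almost surely.

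To control $v$ on $[0,\tau]$ I would split the mild solution \eqref{eq:v-mild} as $v=h+F+Z$, where $h(t,x)=\int_DG(t,x-y)v(0,y)dy$, $F$ is the Duhamel term built from $f(v)=v^{-\alpha}$, and $Z$ is the stochastic convolution of $\sigma(v)$. By the maximum principle the heat flow $P_t\varphi=\int_DG(t,\cdot-y)\varphi(y)dy$ is an $L^\infty$ contraction, and $0\le f(v)\le\e^{-\alpha}$ on $[0,\tau^{\inf}_\e)$, so the deterministic part is bounded on $[0,\tau]$ by $R:=|v(0)|_{L^\infty}+\e^{-\alpha}T$. Thus $|v(t)|_{L^\infty}\le R+\sup_{s\le t}|Z(s)|_{L^\infty}$, and explosion before $\tau$ is equivalent to blow‑up of $Z$. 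The semigroup property of $G$ gives a clean restart: for a stopping time $\rho$ and $t\ge\rho$,
\begin{equation*}
Z(t)=P_{t-\rho}Z(\rho)+\tilde Z_\rho(t),\qquad\tilde Z_\rho(t,x)=\int_\rho^t\int_DG(t-s,x-y)\sigma(v(s,y))W(dyds),
\end{equation*}
with $|P_{t-\rho}Z(\rho)|_{L^\infty}\le|Z(\rho)|_{L^\infty}$.

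The heart of the argument is a dyadic multiscale analysis keyed to the diffusive time scale. Using that $t\mapsto|v(t)|_{L^\infty}$ is continuous, I would build stopping times $\rho_0=0$ and, writing $2^{m_n}$ for the dyadic level of $|v(\rho_n)|_{L^\infty}$,
\begin{equation*}
\rho_{n+1}=\inf\{t>\rho_n:|v(t)|_{L^\infty}\in\{2^{m_n-1},2^{m_n+1}\}\}\wedge(\rho_n+2^{-2m_n})\wedge\tau,
\end{equation*}
declaring a \emph{doubling} on step $n$ when $|v(\rho_{n+1})|_{L^\infty}=2^{m_n+1}$. On each step $|v|_{L^\infty}$ stays in $[2^{m_n-1},2^{m_n+1}]$, so $|\sigma(v(s,y))|\le C2^{3m_n/2}$ pointwise, and the step length is at most the diffusive scale $2^{-2m_n}$. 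The target per‑step estimate is that, conditionally on $\mathcal{F}_{\rho_n}$, a doubling forces the genuine increment $\sup_{t\le\rho_{n+1}}|\tilde Z_{\rho_n}(t)|_{L^\infty}\ge 2^{m_n-1}$ (since entry is at level $2^{m_n}$ and $R\ll 2^{m_n}$), whence Chebyshev's inequality at an exponent $p>6$, Theorem \ref{thm:stoch-conv} on the deterministic horizon $2^{-2m_n}$, and $|\sigma(v)|^p\le(C2^{3m_n/2})^{p-2}|\sigma(v)|^2$ yield
\begin{equation*}
\Pro(\text{doubling on step }n\mid\mathcal{F}_{\rho_n})\le 2^{-(m_n-1)p}\,C_p\,(2^{-2m_n})^{\frac p4-\frac32}(C2^{\frac{3m_n}{2}})^{p-2}\,\E\Big[\int_{\rho_n}^{\rho_{n+1}}\!\!\int_D\sigma(v)^2\,dyds\,\Big|\,\mathcal{F}_{\rho_n}\Big].
\end{equation*}
The three scale factors balance \emph{exactly}: $2^{-m_np}(2^{3m_n/2})^{p-2}(2^{-2m_n})^{p/4-3/2}=2^{m_n[\,-p+\frac{3}{2}(p-2)-2(\frac{p}{4}-\frac{3}{2})\,]}=2^0$, which is precisely the criticality identity for $\gamma=\frac32$ (replacing $\tfrac32$ by $\gamma$ in the noise exponent forces $\gamma=\tfrac32$), and $p>6$ is used exactly to guarantee $\frac p4-\frac32>0$, so that the short horizon $2^{-2m_n}$ genuinely helps. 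Since the steps partition $[0,\tau]$, summing over $n$ and invoking \eqref{eq:quad-var-intro} gives $\E\big[\#\{\text{doublings before }\tau\}\big]\le C_p\,\E\int_0^\tau\int_D\sigma(v)^2\,dyds\le C_pM^2<\infty$. Hence only finitely many doublings occur before $\tau$, so $\sup_{t\le\tau}|v(t)|_{L^\infty}<\infty$ a.s., which yields $\tau^\infty_\infty>\tau$ and $\Pro(\tau^\infty_\infty\le\tau)=0$.

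The main obstacle is reconciling the two conflicting demands in the step construction: detecting a genuine increment of $Z$ requires the solution to enter a step exactly at a dyadic value $2^{m_n}$, whereas the favorable time factor of Theorem \ref{thm:stoch-conv} requires a \emph{deterministic} horizon of size $2^{-2m_n}$. A single crossing from $2^{m_n}$ to $2^{m_n+1}$ may take longer than the diffusive scale, in which case the time cap truncates the step before the doubling completes, the increment is split across several sub‑diffusive windows, and after a truncated step the solution no longer re‑enters at an exact power of $2$. I expect the technical core of Section \ref{S:non-explosion} to be the bookkeeping that controls these slow crossings — tracking the level $m_n$ across truncated steps and showing that a doubling assembled from many sub‑diffusive increments must still consume a definite share either of the quadratic‑variation budget $\int_0^\tau\int_D\sigma(v)^2$ from \eqref{eq:quad-var-intro} or of the finite time budget $T$ — so that the expected doubling count stays summable. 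Everything else (the reduction, the contraction bounds, and the exact exponent cancellation) is comparatively routine once Theorem \ref{thm:stoch-conv} and Lemma \ref{lem:L1} are available.
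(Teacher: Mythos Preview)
Your overall architecture is right: the reduction to $\tau=\tau^{\inf}_\e\wedge\tau^1_M\wedge T$, the dyadic stopping times, the application of Theorem~\ref{thm:stoch-conv} at a diffusive time scale, the exact exponent cancellation at $\gamma=\tfrac32$, and the use of the quadratic-variation budget \eqref{eq:quad-var-intro} with Borel--Cantelli all match the paper. But the ``main obstacle'' you identify is real for your formulation, and the paper resolves it by a device you do not mention, not by the slow-crossing bookkeeping you anticipate.

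The missing idea is this. You restart only the stochastic convolution and bound $|P_{t-\rho_n}Z(\rho_n)|_{L^\infty}\le |Z(\rho_n)|_{L^\infty}\sim 2^{m_n}$, which is constant in $t$ and forces you to impose an artificial time cap $2^{-2m_n}$ to get a short horizon in Theorem~\ref{thm:stoch-conv}. The paper instead restarts the \emph{full solution}: for $t\ge 0$,
\[
v(\rho_n+t,x)=\int_D G(t,x-y)v(\rho_n,y)\,dy + (\text{$f$-term}) + (\text{new stochastic convolution}),
\]
and uses the $L^1$ stopping time in a second, more important way: since $\rho_n\le\tau^1_M$, one has $|v(\rho_n)|_{L^1}\le M$, and the $L^1\to L^\infty$ smoothing of the heat kernel (Lemma~\ref{lem:Linfty-falls}) gives the \emph{decaying} bound
\[
\Big|\int_D G(t,\cdot-y)v(\rho_n,y)\,dy\Big|_{L^\infty}\le CM\,t^{-1/2}.
\]
Choosing $T_m\sim M^2 2^{-2m}$ makes this drop below $2^{m-3}$ at time $T_m$. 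Hence the paper's $\rho_{n+1}$ is defined with \emph{no time cap}: if the fresh stochastic increment on $[0,T_m]$ stays below $2^{m-2}$, the semigroup decay forces $|v(\rho_n+T_m)|_{L^\infty}\le 2^{m-1}$, so the halving event must occur before $T_m$. Every step therefore ends at an exact power of $2$, the deterministic horizon $T_m$ is available for Theorem~\ref{thm:stoch-conv} (via an indicator $\mathbbm{1}_{\{s\le\rho_{n+1}-\rho_n\}}$ in the integrand), and the tension you describe between ``dyadic entry'' and ``deterministic short horizon'' simply disappears. The only residual effect is an extra factor $M^{p/2-3}$ from $T_m^{p/4-3/2}$, which is harmless since $M$ is fixed.

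In short: you are using the $L^1$ bound only in the outer reduction, whereas the paper uses it twice --- once in the reduction and once, crucially, inside each step to make the restarted semigroup contract at the diffusive rate. That second use is what eliminates the obstacle you flagged; there is no slow-crossing bookkeeping in the paper.
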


We will prove Theorem \ref{thm:v-no-explode} in Section \ref{S:non-explosion}. Then the comparison principle, Proposition \ref{prop:comparison}, guarantees that $u(t,x)$ cannot explode in finite time. The main result of our paper, Theorem \ref{thm:main}, will hold once we prove Theorem \ref{thm:v-no-explode}. 

\begin{proof}[Proof of Theorem \ref{thm:main}, assuming that Theorem \ref{thm:v-no-explode} holds]
	By the comparison principle, Proposition \ref{prop:comparison}, 
	\begin{equation}
		-v_-(t,x) \leq u(t,x) \leq v(t,x)
	\end{equation}
	For  $t \in [0, \tau^{\inf}_0 \wedge \tau^{\inf}_{0,-}\wedge  \tau^\infty_\infty \wedge \tau^\infty_{\infty,-}]$ and for all $x \in [-\pi,\pi]$.
	Theorem \ref{thm:v-no-explode} proves that $\tau^\infty_\infty = \tau^\infty_{\infty,-} = \infty$ with probability one. Proposition \ref{prop:v-positive} proves that 
	\begin{equation}
		\tau^{\inf}_0 \geq T \wedge \tau^\infty_\infty
	\end{equation}
	for any $T>0$. This is true for arbitrary $T>0$ and therefore
	$\tau^{\inf}_0 = \tau^{\inf}_{0,-} = \infty$.
	
	Therefore $u(t,x)$ can never explode.
\end{proof}

The rest of the paper is devoted to proving Theorem \ref{thm:v-no-explode}.

\section{The $L^1$ norm of  $v(t,x)$ } \label{S:L1}
The first step to prove that the solutions to $v(t,x)$ do not explode is to prove that the $L^1$ norms of solutions do not explode.

Let $v(t,x)$ be the nonnegative local mild solution to \eqref{eq:v}.
Define for $t \in [0, \tau^\infty_\infty]$
\begin{equation}
	 |v(t)|_{L^1}: = \int_D v(t,x)dx.
\end{equation}

Define the $L^1$ stopping times for $M>0$
\begin{equation} \label{eq:tau-1-M}
	\tau^1_M: = \inf\{t \in [0,\tau^\infty_\infty]: |v(t)|_{L^1} >M\}.
\end{equation}

\begin{lemma} \label{lem:L1}
	For any $T>0$, $\e>0$ and $M>0$,
	\begin{equation}
		\Pro \left(\sup_{t \in [0,T \wedge \tau^\infty_\infty \wedge \tau^{\inf}_\e] } |v(t)|_{L^1} > M\right) \leq \frac{|u(0)|_{L^1} + 2 \pi T \e^{-\alpha}}{M},
	\end{equation}
    In particular,
	\begin{equation}
		\Pro \left(\sup_{t \in [0,T \wedge \tau^\infty_\infty \wedge \tau^{\inf}_\e ] } |v(t)|_{L^1}< \infty\right)=1.
	\end{equation}
	Furthermore, for any $M>0$ and $\e>0$, the quadratic variation of $|v(t)|_{L^1}$ satisfies
	\begin{align} \label{eq:L3-bound}
		&\E \int_0^{   \tau^1_M \wedge \tau^{\inf}_\e } \int_D |\sigma(v(t,x))|^2 dyds  \leq M^2.
	\end{align}
\end{lemma}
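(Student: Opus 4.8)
Starting from the mild solution formula \eqref{eq:v-mild}, I would integrate both sides over $x \in D$. Using the fact from Lemma \ref{lem:kernel-estimates} that $\int_D G(t-s,x-y)\,dx = |G(t-s,\cdot)|_{L^1} = \sqrt{2\pi}$ for every $y$ (by periodicity and the exact $L^1$ computation), the deterministic and stochastic convolutions collapse: the initial-data term becomes $\int_D v(0,y)\,dy = |v(0)|_{L^1}$, the $f$-term becomes $\sqrt{2\pi}\int_0^t\int_D f(v(s,y))\,dy\,ds$, and the stochastic term becomes $\sqrt{2\pi}\int_0^t\int_D \sigma(v(s,y))\,W(dy\,ds)$ after interchanging the spatial integral with the stochastic integral via stochastic Fubini. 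Wait — I should be careful: the constant $\sqrt{2\pi}$ needs to be tracked, but it is harmless and can be absorbed. This identity must be asserted only up to the stopping time $\tau^\infty_\infty \wedge \tau^{\inf}_\e$, on which $v$ is bounded in $[\e, n]$ and the stochastic Fubini hypotheses hold.

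\emph{Setting up the martingale structure.} On the interval $[0, \tau^\infty_\infty \wedge \tau^{\inf}_\e]$, I would define
\[
	N(t) := \int_0^{t} \int_D \sigma(v(s,y))\,W(dy\,ds),
\]
which is a continuous $L^2$-martingale (stopped appropriately), with quadratic variation $\langle N \rangle_t = \int_0^t \int_D |\sigma(v(s,y))|^2\,dy\,ds$. Since $v \geq 0$ so that $f(v) = v^{-\alpha} \geq 0$, the drift term is nonnegative, and hence $|v(t)|_{L^1} = |v(0)|_{L^1} + \sqrt{2\pi}\int_0^t\int_D f\,dy\,ds + \sqrt{2\pi}\,N(t)$ is a nonnegative submartingale when stopped at $\tau^{\inf}_\e$. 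For the probability bound I would apply Doob's submartingale maximal inequality; the drift over $[0, T \wedge \tau^{\inf}_\e]$ is controlled by $f(v) \leq \e^{-\alpha}$ on that set, contributing at most $\sqrt{2\pi}\cdot 2\pi T \e^{-\alpha}$ to the expected value (the $2\pi = |D|$ being the measure of the spatial domain), yielding the stated bound $(|u(0)|_{L^1} + 2\pi T\e^{-\alpha})/M$ up to the constant bookkeeping.

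\emph{The quadratic-variation bound \eqref{eq:L3-bound}.} This is the genuinely new estimate, and I would obtain it by applying Itô's formula to $(|v(t)|_{L^1})^2$, or more directly by taking expectations in the identity $\E[N(t\wedge\tau^1_M\wedge\tau^{\inf}_\e)^2] = \E\langle N\rangle_{t\wedge\tau^1_M\wedge\tau^{\inf}_\e}$. The point of stopping at $\tau^1_M$ is that $|v(t)|_{L^1} \leq M$ there, so $N$ stays bounded in a controllable way; combining this with the fact that the drift is nonnegative (so it only helps bound $N$ from above in the right direction), one gets $\E\int_0^{\tau^1_M\wedge\tau^{\inf}_\e}\int_D |\sigma(v)|^2\,dy\,ds \leq M^2$ after absorbing the $\sqrt{2\pi}$ constant.

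\textbf{The main obstacle} is the rigorous justification of the stochastic Fubini interchange and of the localization: one must verify that the integrand $\sigma(v(s,y))$ satisfies the square-integrability hypotheses of Walsh's stochastic Fubini theorem \emph{uniformly up to the stopping time}, and that stopping at $\tau^\infty_n \wedge \tau^{\inf}_\e$ keeps $v$ in a compact interval $[\e,n]$ where $\sigma$ is bounded, so that $N$ is a true martingale rather than merely a local one. Care is also needed because the claimed estimates are meant to be \emph{independent of $n$}; the submartingale inequality and the quadratic-variation identity must be set up at the level $\tau^1_M$ (an $L^1$-control) rather than $\tau^\infty_n$ (an $L^\infty$-control), and then one takes $n \to \infty$ — this is exactly the decoupling of the $L^1$ estimates from the $L^\infty$ norm that the introduction emphasizes as the crux of handling the critical exponent.
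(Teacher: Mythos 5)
Your proposal follows essentially the same route as the paper's proof: integrate the mild equation in space (the kernel's mass-conservation collapses all three terms, the $\sqrt{2\pi}$ being harmless bookkeeping), observe that the resulting process is a nonnegative submartingale because $f \geq 0$, apply Doob's maximal inequality with the drift bounded by $2\pi T\e^{-\alpha}$ up to $\tau^{\inf}_\e$, and obtain \eqref{eq:L3-bound} by applying It\^o's formula to the square of the $L^1$ norm — all at the level of processes stopped at $\tau^\infty_n \wedge \tau^{\inf}_\e$ (so that $\sigma(v)$ is bounded and the stochastic integral is a true martingale), with bounds uniform in $n$, followed by $n \to \infty$. Your localization discussion matches the paper's.

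One warning about your ``more direct'' alternative for \eqref{eq:L3-bound}: bounding $\E\bigl[N(t\wedge\tau^1_M\wedge\tau^{\inf}_\e)^2\bigr] = \E\langle N\rangle_{t\wedge\tau^1_M\wedge\tau^{\inf}_\e}$ by arguing that $N$ ``stays bounded'' because $|v(t)|_{L^1} \leq M$ and the drift is nonnegative does not work as stated. Writing $I(t) = |v(t)|_{L^1}$, $D(t)$ for the accumulated drift, and $N = I - I(0) - D$, the nonnegativity of $D$ bounds $N$ only from \emph{above} ($N \leq I \leq M$), not from below: when $D$ accumulates, $N$ must be correspondingly negative to keep $I \leq M$, and a one-sided bound on a martingale does not control its second moment (Brownian motion stopped upon hitting level $M$ stays below $M$ yet has unbounded second moment). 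The valid argument is precisely your first option, which is the paper's: expand $I^2$ by It\^o's formula, note that the cross term $2\int_0^{t\wedge\tau} I(s)\,dD(s)$ is nonnegative because both $I \geq 0$ and $dD \geq 0$, discard it, and conclude $\E\langle N\rangle_{t\wedge\tau} \leq \E\bigl[I(t\wedge\tau)^2\bigr] \leq M^2$. So in any write-up you should commit to the It\^o-on-$I^2$ route and drop the shortcut.
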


\begin{proof}
	Let $n>0$ be big and $\e>0$ be small enough so that $\e < v(0,x) < n$ for all $x \in D$. Let
	\begin{align} 
		&I_{n,\e}(t ):=\int_Dv(t \wedge \tau^\infty_n \wedge \tau^{\inf}_\e,x)dx. 
	\end{align}
	The $\tau^{\inf}_\e$ stopping time guarantees that $v(t \wedge \tau^\infty_n \wedge \tau^{\inf}_\e,x)\geq \e$ so that $I_{n,\e}$ is the $L^1$ norm $ |v(t \wedge \tau^\infty_n \wedge \tau^{\inf}_\e)|_{L^1} $.
	Integrating the mild solution \eqref{eq:v-mild} and using the fact that $\int_DG(t,x-y)dx = 1$,
	\begin{align} \label{eq:L1-martingale}
        I_{n,\e}(t)= &\int_D v(0,y)dy + \int_0^{t\wedge \tau^\infty_n \wedge \tau^{\inf}_\e} \int_D f(v(s,x))dxds \nonumber\\
        &+ \int_0^{t\wedge \tau^\infty_n \wedge \tau^{\inf}_\e} \int_D   \sigma(v(s,y))W(dyds).
	\end{align}
	$I_{n,\e}(t)$ is  a nonnegative submartingale becuase $f(v)>0$ and because the stochastic integral in \eqref{eq:L1-martingale} is a martingale.
	Therefore, for any $M>0$ and $T>0$, by Doob's inequality 
	\begin{equation}
		\Pro\left( \sup_{t \in [0,T]} I_{n,\e}(t) > M\right) \leq \frac{\E I_{n,\e}(T)}{M} \leq \frac{|v(0)|_{L^1} + 2\pi T\e^{-\alpha}}{M}
	\end{equation}
	because $f(v) \leq \e^{-\alpha}$ when $v>\e$, the length of $D=[-\pi,\pi]$ is $2\pi$, and because the expectation of the stochastic integral in \eqref{eq:L1-martingale} is zero.
	This bound does not depend on $n$.  Therefore,
	\begin{equation}
		\Pro \left(\sup_{t \in [0, T \wedge \tau^{\inf}_\e \wedge \tau^\infty_\infty]} \int_D v(t,x)dx > M \right) \leq \frac{|v(0)|_{L^1} + 2\pi T\e^{-\alpha}}{M}.
	\end{equation}
	Now take $M \uparrow \infty$ to see that
	\begin{equation}
		\Pro \left(\sup_{t \in [0, T \wedge \tau^{\inf}_\e \wedge \tau^\infty_\infty]} \int_D u(t,x)dx < \infty \right)=1.
	\end{equation}

	Now we apply Ito formula to \eqref{eq:L1-martingale}. For any $M>0$, $n>0$, $\e>0$,
	\begin{align}
		&\E(I_{n,\e}(t \wedge \tau^1_M))^2 \nonumber\\
		&= \E(I_{n,\e}(0))^2 + 2\E \int_0^{t \wedge \tau^\infty_n\wedge \tau^{\inf}_\e \wedge \tau^1_M} \int_D f(v(s,y))I_{n,\e}(s)dyds \nonumber\\
		&\qquad+ \E \int_0^{t \wedge \tau^\infty_n \wedge \tau^{\inf}_\e \wedge \tau^1_M} \int_D |\sigma(v(s,y))|^2dyds.
	\end{align}
	Each term on the right-hand side is nonnegative and $\E(I_{n,\e}(t \wedge \tau^1_M))^2 \leq M^2$ by the definition of $\tau^1_M$. Therefore,
	\begin{equation}
		\E \int_0^{t \wedge \tau^\infty_n \wedge \tau^{\inf}_\e \wedge \tau^1_M} \int_D |\sigma(v(s,y))|^2dyds \leq M^2.
	\end{equation} 
	This bound does not depend on $n$, $\e$, or $t$.
\end{proof}

\section{Moment estimates on the stochastic convolution} \label{S:moment}
In this section we prove the moment estimate Theorem \ref{thm:stoch-conv}.
\begin{proof}[Proof of Theorem \ref{thm:stoch-conv}]
	Let $p>6$ and assume that $\varphi(t,x)$ is adapted and
	\begin{equation}
		\E \int_0^T \int_D |\varphi(t,x)|^pdxdt < +\infty.
	\end{equation}
	We use Da Prato and Zabczyk's factorization method \cite[Theorem 5.10]{dpz}. Given $p>6$ let $\beta \in \left(\frac{3}{2p}, \frac{1}{4}\right)$ and define
	\begin{equation}
		Z^\varphi_\beta(t,x)= \int_0^t \int_D (t-s)^{-\beta}G(t-s,x,y) \varphi(s,y)W(dyds).
	\end{equation}
	Then
	\begin{equation}
		Z^\varphi(t,x) = \frac{\sin(\pi\beta)}{\pi} \int_0^t\int_D (t-s)^{\beta - 1} G(t-s,x,y)Z^\varphi_\beta(s,y)dyds.
	\end{equation}
	We can get supremum bounds on $Z^\varphi(t,x)$ by H\"older's inequality. This method was used, for example, by Chen and Huang \cite[Proof of Theorem 1.6]{ch-2023}.
	\begin{align}
		\sup_{t \in [0,T]} \sup_{x \in D} |Z^\varphi(t,x)|
		\leq &C\left(\int_0^t \int_D(t-s)^{\frac{(\beta-1)p}{p-1}}G^{\frac{p}{p-1}}(t-s,x-y)dyds\right)^{\frac{p-1}{p}} \nonumber\\
		&\times\left( \int_0^T\int_D |Z^\varphi_\beta(t,x)|^pdx\right)^{\frac{1}{p}}.
	\end{align}
	The integral 
	$$\int_D G^{\frac{p}{p-1}}(t-s,x-y)dy \leq |G(t-s)|_{L^1}|G(t-s)|_{L^\infty}^{\frac{p}{p-1} -1} \leq C (t-s)^{-\frac{1}{2(p-1)}}$$
	 because of Lemma \ref{lem:kernel-estimates}.
    Because we chose $p\beta > \frac{3}{2}$, it follows that $\frac{(\beta -1)p - \frac{1}{2}}{p-1} > -1$ and therefore
	\begin{align} \label{eq:Z-factored}
		&\E\sup_{t \in [0,T]} \sup_{x \in D} |Z(t,x)|^p\nonumber\\
		&\leq C \left(\int_0^t (t-s)^{\frac{(\beta -1)p - \frac{1}{2}}{p-1}}ds\right)^{p-1}\E\int_0^T \int_D |Z^\varphi_\beta(t,x)|^pdxdt \nonumber\\
		&\leq C T^{\beta p  - \frac{3}{2}} \E\int_0^T \int_D |Z^\varphi_\beta(t,x)|^pdxdt
	\end{align}
	It remains to estimate $\E\int_0^T \int_D |Z^\varphi_\beta(t,x)|^p dx dy$.
	By the BDG inequality,
	\begin{equation}
	\E |Z^\varphi_\beta(t,x)|^p \leq C_p \E\left( \int_0^t\int_D G^2(t-s,x-y)(t-s)^{-2\beta} |\varphi(s,y)|^2 dyds \right)^{\frac{p}{2}}.
	\end{equation}
	By Young's inequality for convolutions,
	\begin{align}
		&\int_0^T \int_D \E |Z^\varphi_\beta(t,x)|^pdxdt \nonumber\\
		&\leq C_p \left(\int_0^T \int_D G^{2}(s,y)s^{-2 \beta}dyds \right)^{\frac{p}{2}} \left(\int_0^T \int_D \E(|\varphi(s,y)|^p) dyds \right)
		\nonumber\\ 
		&\leq C_p \left(\int_0^T s^{-\frac{1}{2}-2 \beta}ds \right)^{\frac{p}{2}} \left(\int_0^T \int_D \E(|\varphi(s,y)|^p) dyds \right) \nonumber\\
		&\leq C_p T^{\frac{p}{4} - p \beta } \E\int_0^T \int_D |\varphi(s,y)|^pdyds.
	\end{align}
	In the second-to-last line we used Lemma \ref{lem:kernel-estimates} to estimate that $$\int_DG^2(s,y)dy \leq |G(s,\cdot)|_{L^\infty}|G(s,\cdot)|_{L^1} \leq  C s^{-\frac{1}{2}}.$$ 
	Combining this with \eqref{eq:Z-factored} we conclude that
	\begin{equation}
		\E \sup_{t \in [0,T]} \sup_{x \in D} |Z(t,x)|^p \leq C T^{\frac{p}{4} - \frac{3}{2}}  \E\int_0^T \int_D |\varphi(s,y)|^p dyds.
	\end{equation}
\end{proof}

\section{Non-explosion of $v(t,x)$} \label{S:non-explosion}
Let $M>0$ and $\e>0$ be arbitrary. We will show that $v(t,x)$ cannot explode before time $ \tau^1_M \wedge \tau^{\inf}_\e$. After we prove this, we can take the limits as $M \to \infty$  in Lemma \ref{lem:L1} and $\e \to 0$ in Proposition \ref{prop:v-positive} to prove that explosion cannot ever occur.


Fix $\e>0$, $M>0$ and define a sequence of stopping times $\rho_n$. These stopping times depend on the choices of $\e$, and $M$.
\begin{align}
	&\rho_{0}  =\inf\{t \in [0, \tau^{\inf}_\e \wedge \tau^1_M]: |v(t)|_{L^\infty} = 2^m \text{ for some } m \in \{1,2,3, ...\}\}.
\end{align}
Then if $|v(\rho_n)|_{L^\infty} = 2^m$ for $m \geq 2$ we define
\begin{align}
	&\rho_{n+1} =
		\inf\left\{t \in [\rho_{n}, \tau^{\inf}_\e \wedge \tau^1_M]: \begin{matrix} |v(t)|_{L^\infty}\geq 2^{m+1} \\
			\text{ or } |v(t)|_{L^\infty} \leq 2^{m-1} \end{matrix} \right\},\label{eq:rho-def}
\end{align}
and if $|v(\rho_n)|_{L^\infty} = 2$ then
\begin{align}
		&\rho_{n+1} = \inf\left\{t \in [\rho_{n}, \tau^{\inf}_\e \wedge \tau^1_M]: |v(t)|_{L^\infty}\geq 2^{2} \right\}.
\end{align}
These times keep track of how long it takes for the $L^\infty$ norm of the process to either double or half. We use the convention that $\rho_{n+1} =  \tau^{\inf}_\e \wedge \tau^1_M$ if the process stops doubling or halving after $\rho_n$.
 If the process were to explode before time $ \tau^{\inf}_\e \wedge \tau^1_M$, then the $L^\infty$ norm would need to double an infinite amount of times before that $\tau^{\inf}_\e \wedge \tau^1_M$. We prove that the process does not explode by proving that there can only be a finite number of times that the $L^\infty$ norm doubles when $m$ is big.

Next we recall a result  that proves that the $L^\infty$ norm falls quickly if the $L^1$ norm is bounded.
\begin{lemma}  \label{lem:Linfty-falls}
	There exists $C>0$ such that if $v \in L^1(D))$ then for any $t\in [0,1]$,\\
	\begin{equation} \label{eq:Linfty-falls}
		\int_{D}G(t,x-y)v(y)dy \leq C t^{-\frac{1}{2}} |v|_{L^1}.
	\end{equation}
\end{lemma}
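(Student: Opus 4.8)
The plan is to bound the convolution by the $L^1$--$L^\infty$ duality pairing and then insert the pointwise-in-time decay of the heat kernel supplied by Lemma \ref{lem:kernel-estimates}. Concretely, since $G(t,\cdot)\geq 0$ by Lemma \ref{lem:kernel-estimates}(1), for each fixed $x \in D$ I would first estimate
\begin{equation}
	\int_D G(t,x-y) v(y)\,dy \leq \int_D G(t,x-y) |v(y)|\,dy \leq \Big(\sup_{z \in D} G(t,z)\Big) \int_D |v(y)|\,dy = |G(t,\cdot)|_{L^\infty}\, |v|_{L^1}.
\end{equation}
This is simply H\"older's inequality with conjugate exponents $\infty$ and $1$ applied to the product $G(t,x-y)v(y)$, together with the fact that translating the argument of $G(t,\cdot)$ does not change its supremum.

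The second step is to invoke Lemma \ref{lem:kernel-estimates}(3), which furnishes a constant $C>0$ with $|G(t,\cdot)|_{L^\infty} \leq C t^{-\frac12}$. Substituting this into the display above gives
\begin{equation}
	\int_D G(t,x-y) v(y)\,dy \leq C t^{-\frac{1}{2}}\, |v|_{L^1},
\end{equation}
which is exactly the claimed bound. The right-hand side is independent of $x$, so the estimate holds uniformly over $x \in D$ at no extra cost.

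There is essentially no genuine obstacle here; this is a routine two-line estimate. The only point requiring any care is the uniformity of the constant, and this is precisely what motivates the restriction $t \in [0,1]$ in the statement. The kernel bound in Lemma \ref{lem:kernel-estimates} actually arises as an additive constant (from the zero Fourier mode $1/\sqrt{2\pi}$) plus the singular $t^{-\frac12}$ term, and collapsing these into a single clean $C t^{-\frac12}$ bound with one fixed constant is only possible once $t$ is bounded above. For $t \in [0,1]$ one has $t^{-\frac12} \geq 1$, so the additive constant is dominated by the singular term and a uniform $C$ exists. Finally, the hypothesis $v \in L^1(D)$ enters only to ensure $|v|_{L^1}<\infty$; no sign condition on $v$ is needed, since the argument passes through $|v|$ at the first inequality.
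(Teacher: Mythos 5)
Your proof is correct and is essentially identical to the paper's: the paper likewise disposes of the lemma in one line by citing the bound $|G(t,\cdot)|_{L^\infty} \leq C t^{-\frac{1}{2}}$ from Lemma \ref{lem:kernel-estimates}, which combined with the $L^1$--$L^\infty$ pairing gives \eqref{eq:Linfty-falls}. Your closing observation about why the restriction $t \in [0,1]$ matters (the additive zero-mode constant in the kernel bound is only absorbed into $Ct^{-\frac{1}{2}}$ when $t$ is bounded above) is a correct and worthwhile clarification, but it does not change the argument.
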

\begin{proof}
	We proved in Lemma \ref{lem:kernel-estimates} that $|G(t,\cdot)|_{L^\infty} \leq C t^{-\frac{1}{2}}$. Therefore, for any $v \in L^1$, \eqref{eq:Linfty-falls} holds.
\end{proof}

\begin{lemma} \label{lem:doubling-probability}
	For any $p>6$ there exists a nonrandom constant $C_p>0$ and for any  $\e>0$ and $M>0$ there exists a nonrandom constant $m_0=m_0(\e,M)>0$ such that for any $n \in \mathbb{N}$,  and $m>m_0$
	\begin{align}
		&\Pro \left( |v(\rho_{n+1})|_{L^\infty} = 2^{m+1}  \Big| |v(\rho_n)|_{L^\infty} = 2^m  \right) \nonumber\\
		&\leq
		C_p M^{\frac{p}{2} - 3} \E\left( \int_{\rho_{n} }^{\rho_{n+1}} \int_D(\sigma(v(s,y)))^2 dyds \Bigg| |v(\rho_n)|_{L^\infty} = 2^m  \right)
	\end{align}
	Importantly,the constant $C_p$ is independent of $m > m_0$.
\end{lemma}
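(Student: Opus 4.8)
The plan is to restart the mild solution a short time before the doubling time $\rho_{n+1}$, argue that the doubling event forces the stochastic convolution over that short window to be large, and then bound that probability by Markov's inequality together with the sharp moment estimate Theorem \ref{thm:stoch-conv}. The whole argument is carried out conditionally on the event $\{|v(\rho_n)|_{L^\infty}=2^m\}$ and on $\mathcal{F}_{\rho_n}$.

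First I would fix a scale $\delta = \delta(m,M) := C M^2 2^{-2m}$, where $C$ is the constant from Lemma \ref{lem:Linfty-falls}, chosen so that the heat evolution over a time interval of length $\delta$ of \emph{any} $L^1$ function of norm at most $M$ has $L^\infty$ norm at most $2^m$. Writing the mild solution restarted from $\theta := \max\{\rho_{n+1}-\delta,\rho_n\}$ gives
\[
v(\rho_{n+1},x) = \int_D G(\rho_{n+1}-\theta,x-y)v(\theta,y)\,dy + \int_\theta^{\rho_{n+1}}\!\!\int_D G(\rho_{n+1}-s,x-y)f(v(s,y))\,dy\,ds + Z_\theta(\rho_{n+1},x),
\]
where $Z_\theta$ is the stochastic convolution from time $\theta$. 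Because the $L^1$ norm never exceeds $M$ on $[\rho_n,\rho_{n+1}]$, Lemma \ref{lem:Linfty-falls} (when $\rho_{n+1}-\rho_n\ge\delta$) and the maximum principle (when the window is shorter) both bound the first term by $2^m$; since $v\ge\e$ before $\tau^{\inf}_\e$, the forcing term is at most $C\e^{-\alpha}\delta = C'\e^{-\alpha}M^2 2^{-2m}$, which is $\le 2^{m-1}$ once $m>m_0(\e,M):=\tfrac{1}{3}\log_2(C''\e^{-\alpha}M^2)$. Hence on the doubling event the stochastic convolution must satisfy $\sup_{s\in[\theta,\rho_{n+1}],\,x}|Z_\theta(s,x)|\ge 2^{m-1}$.

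Then by Markov's inequality and Theorem \ref{thm:stoch-conv} applied on the window $[\theta,\rho_{n+1}]$ of length at most $\delta$, the conditional doubling probability is bounded by $2^{-(m-1)p}\,\E\sup|Z_\theta|^p \le C_p\, 2^{-(m-1)p}\,\delta^{\frac{p}{4}-\frac{3}{2}}\,\E\int_\theta^{\rho_{n+1}}\!\int_D|\sigma(v)|^p\,dy\,ds$. Using $v\le 2^{m+1}$ on the window and the growth bound $|\sigma(v)|^2\le C(1+|v|^3)$, I would convert the $L^p$ integral into the quadratic-variation integral via $|\sigma(v)|^p\le (C(1+(2^{m+1})^{3/2}))^{p-2}|\sigma(v)|^2\le C\,2^{\frac{3m(p-2)}{2}}|\sigma(v)|^2$, and then bound $\int_\theta^{\rho_{n+1}}\le\int_{\rho_n}^{\rho_{n+1}}$. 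Substituting $\delta=CM^22^{-2m}$ and collecting the powers of $2$ gives the exponent $-(m-1)p+\tfrac{3m(p-2)}{2}-m(\tfrac{p}{2}-3)=p$, so all $m$-dependence cancels while $\delta^{\frac{p}{4}-\frac32}$ contributes exactly $M^{\frac{p}{2}-3}$, yielding the asserted bound with $C_p$ independent of $m>m_0$.

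The main obstacle is that the natural restart time $\rho_{n+1}-\delta$ is \emph{not} a stopping time, so Theorem \ref{thm:stoch-conv}, stated for deterministic intervals starting at $0$, cannot be applied verbatim. I expect the crux of the rigorous argument to be this localization: I would lay down a deterministic grid of mesh $\delta$, restart instead from the grid point lying just below $\rho_{n+1}-\delta$ (a deterministic, hence admissible, time at which $v$ still has $L^1$ norm at most $M$ on the relevant event), and take a union bound whose per-cell quadratic-variation integrals telescope to $\int_{\rho_n}^{\rho_{n+1}}\!\int_D\sigma^2$. The overlapping-window bookkeeping and the conditioning on $\{|v(\rho_n)|_{L^\infty}=2^m\}$ are the delicate points, whereas the exponent balance and the use of the $L^1$ norm to summarize the entire pre-window history (through the decay factor $\delta^{-1/2}M$ of Lemma \ref{lem:Linfty-falls}) form the robust core of the argument.
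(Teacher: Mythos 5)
Your quantitative core is exactly the paper's, and your arithmetic checks out: Chebyshev at level $\approx 2^{m}$, Theorem \ref{thm:stoch-conv} on a window of length $\delta\approx M^{2}2^{-2m}$, the conversion $|\sigma(v)|^{p}\le C\,2^{\frac{3m(p-2)}{2}}|\sigma(v)|^{2}$ (valid only while $|v|_{L^\infty}\lesssim 2^{m+1}$), and the exponent cancellation leaving $M^{\frac{p}{2}-3}$. The gap is in the localization you yourself flag as the crux. Your backward restart time $\rho_{n+1}-\delta$ is not a stopping time, and the grid repair as sketched does not deliver the claimed telescoping: the cell containing $\rho_{n+1}$ can begin before $\rho_{n}$, and summing the per-cell bounds over all grid points produces $\mathbb{E}\int_{0}^{\rho_{n+1}}\int_{D}|\sigma(v(s,y))|^{p}\,dy\,ds$ rather than the quadratic variation over $[\rho_{n},\rho_{n+1}]$. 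On $[0,\rho_{n})$ there is no a priori bound on $|v|_{L^\infty}$ (the absence of such a bound is the very thing being proved), so the conversion from $|\sigma|^{p}$ to $2^{\frac{3m(p-2)}{2}}|\sigma|^{2}$ fails there, and those contributions are not controlled by $\int_{\rho_{n}}^{\rho_{n+1}}\int_{D}\sigma^{2}$. To repair it you would have to insert the adapted indicator $\mathbbm{1}_{\{\rho_{n}\le s\le\rho_{n+1}\}}$ into each cell's integrand, restrict the union to cells with $t_{k}\ge\rho_{n}$ (invoking the local property of the It\^{o} integral on the $\mathcal{F}_{t_{k}}$-event $\{t_{k}\ge\rho_{n}\}$), and then separately treat the realizations in which no grid point lies in $[\rho_{n},\rho_{n+1}-\delta]$ --- and that residual case forces a restart at the stopping time $\rho_{n}$, exactly the move your construction was designed to avoid.

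The premise to drop is that Theorem \ref{thm:stoch-conv} cannot be used after a stopping time. The shifted noise $W(dy(ds+\rho_{n}))$ is again a space-time white noise with respect to the filtration $\mathcal{F}_{\rho_{n}+t}$, and $\varphi(t,x)=\sigma(v(\rho_{n}+t,x))\mathbbm{1}_{\{t\le\rho_{n+1}-\rho_{n}\}}$ is adapted to it, so the theorem applies verbatim with a deterministic horizon after the restart. This is what the paper does, and it eliminates the backward window and the grid altogether: restart at $\rho_{n}$, run for the deterministic time $T_{m}=C^{2}M^{2}2^{6-2m}$, and use the dichotomy that Lemma \ref{lem:Linfty-falls} plus $|v(\rho_{n})|_{L^{1}}\le M$ give $|S_{n}(t)|_{L^\infty}\le 2^{m}$ for all $t\le T_{m}$ (by $L^\infty$-contraction of the heat semigroup) and $|S_{n}(T_{m})|_{L^\infty}\le 2^{m-3}$; hence if the stochastic convolution stays below $2^{m-2}$ on $[0,T_{m}]$, then $|v(\rho_{n}+t)|_{L^\infty}<2^{m+1}$ for all $t\le T_{m}$ while $|v(\rho_{n}+T_{m})|_{L^\infty}\le 2^{m-1}$, i.e.\ the norm halves before it can double. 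Thus the doubling event is contained in the single forward-in-time event $\sup_{t\le T_{m}}\sup_{x\in D}|Z_{n}(t+\rho_{n},x)|>2^{m-2}$, whose probability is then bounded by the same Chebyshev-plus-Theorem \ref{thm:stoch-conv} computation you already carried out.
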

\begin{proof}
	Let $M>0$ and assume that $2^m = |v(\rho_{n} )|_{L^\infty}$. By the semigroup property of the heat semigroup, the mild solution for $t \in [0, \rho_{n+1}- \rho_{n} ]$, satisfies
	\begin{align}
		&v((t + \rho_{n}) , x) \nonumber\\
		&= \int_D G(t,x-y) v(\rho_{n} ,y)dy \nonumber\\
		&\qquad + \int_0^t \int_D G(t-s,x-y) f(v(s + \rho_n,y))dyds \nonumber\\
		&\quad+ \int_0^t \int_D G(t-s,x-y) \sigma(v(s + \rho_n,y))\mathbbm{1}_{\{s\leq \rho_{n+1}-\rho_n\}}  W(dy(ds + \rho_{n})) \nonumber\\
		&=: S_n(t,x) + K_n(t,x) + Z_n(t,x)	
	\end{align}

	By Lemma \ref{lem:Linfty-falls} and the fact that $|u(\rho_{n})|_{L^1}\leq M$ (remember that by definition $\rho_n \leq \tau^{\inf}_\e \wedge \tau^1_M$), it follows that for $t\in (0,1)$,
	\begin{equation} \label{eq:S-bound}
		|S_n(t)|_{L^\infty} \leq CM t^{-\frac{1}{2}}.
	\end{equation}
	Let $T_m = \frac{C^2M^2}{2^{2m-6}}$ so that $S_n(T_m) \leq 2^{m-3}$.
	We can bound
	\begin{equation}
		\sup_{t \leq T_m} \sup_{x \in D} |K_n(t,x)| \leq 2\pi T_m \e^{-\alpha} \leq \frac{C^2 M^2}{\e^\alpha 2^{2m-6}} ,
	\end{equation}
    because $f(v(s,y)) \leq \e^{-\alpha}$ for all $s \leq \rho_{n+1} \leq \tau^{\inf}_\e$.
	Choose $m_0 = m_0(\e,M)$ large enough so that for $m > m_0$,
	\begin{equation} \label{eq:K-bound}
			\sup_{t \leq T_m} \sup_{x \in D} |K_n(t,x)| \leq \frac{C^2 M^2}{\e^\alpha 2^{2m-2}} < 2^{m-3}.
	\end{equation}
	
	Theorem \ref{thm:stoch-conv} with
	\[\varphi(t,x) := \sigma( v(\rho_{n} + t,x) )\mathbbm{1}_{\{t\leq \rho_{n+1}-\rho_n\}}  \]
	 and the Chebyshev inequality guarantee that
	\begin{align}
		&\Pro \left(\sup_{t \leq T_m} \sup_{x \in D} |Z_n((t + \rho_n) ,x)| > 2^{m-2}  \Big| |v(\rho_n)|_{L^\infty} = 2^m\right) \nonumber\\
		&\leq 2^{-p(m-2)}\E\left(\sup_{t \leq T_m}  \sup_{x \in D} |Z_n((t + \rho_n) ,x)|^p  \Big| |v(\rho_n)|_{L^\infty}=2^m\right)\nonumber\\
		&\leq C 2^{-p(m-2)} T_m^{\left(\frac{p}{4} - \frac{3}{2}\right)} \nonumber\\
		&\qquad\times\E \left( \int_{\rho_{n}}^{(\rho_{n} + T_m) \wedge \rho_{n+1}} \int_D (\sigma(v(s,y)))^p dyds \Big| |v(\rho_n)|_{L^\infty} = 2^m\right) .
	\end{align}
	Because $|v(s,y)| \leq 2^{m+1}$ for $t \leq \rho_{n+1}$, our $\sigma$ growth restriction \ref{eq:sigma-growth} guarantees that $|\sigma(v(s,y))| \leq C(1 + 2^{\frac{3(m+1)}{2}}) \leq C 2^{\frac{3m}{2}}$. We bound 
	\begin{equation}
		|\sigma(v(s,y))|^p \leq|\sigma(v(s,y))|^{p-2}|\sigma(v(s,y))|^2 \leq C2^{\left(\frac{3pm}{2} - 3m\right)}|\sigma(v(s,y))|^2
	\end{equation} 
	and therefore
	\begin{align}
		&\Pro \left(\sup_{t \leq T_m} \sup_{x \in D} |Z_n((t + \rho_n) ,x)| > 2^{m-2}  \Big| |v(\rho_n)|_{L^\infty} = 2^m\right) \nonumber\\
		&\leq C 2^{-p(m-2)} 2^{\left(\frac{3pm}{2} - 3m\right)}T_m^{\left(\frac{p}{4} - \frac{3}{2}\right)} \nonumber\\
		&\qquad\times\E \left( \int_{\rho_{n}}^{(\rho_{n} + T_m) \wedge \rho_{n+1}} \int_D (\sigma(v(s,y)))^2 dyds \Big| |v(\rho_n)|_{L^\infty} = 2^m\right)
		\nonumber\\
		&\leq C  2^{\left(\frac{pm}{2} - 3m\right)}T_m^{\left(\frac{p}{4} - \frac{3}{2}\right)} \nonumber\\
		&\qquad\times\E \left( \int_{\rho_{n}}^{(\rho_{n} + T_m) \wedge \rho_{n+1}} \int_D (\sigma(v(s,y)))^2 dyds \Big| |v(\rho_n)|_{L^\infty} = 2^m\right)
	\end{align}
	$T_m$ is defined in such a way that $T_m^{\frac{1}{2}}2^m \leq C M$. This means that $$2^{\left(\frac{pm}{2} - 3m\right)}T_m^{\left(\frac{p}{4} - \frac{3}{2}\right)} \leq C M^{\frac{p}{2}-3} .$$ We also can bound $(\rho_{n} + T_m) \wedge \rho_{n+1} \leq \rho_{n+1}$. Therefore,
	\begin{align}
		&\Pro \left(\sup_{t \leq T_m} \sup_{x \in D} |Z_n((t + \rho_n) ,x)| > 2^{m-2}\Big| |v(\rho_n)|_{L^\infty} = 2^m \right) \nonumber\\
		&\leq C  M^{ \frac{p}{2} - 3} \E \left(\int_{\rho_{n}}^{\rho_{n+1}} \int_D (\sigma(v(s,y)))^2 dyds \Big| |v(\rho_n)|_{L^\infty} = 2^m\right) .
	\end{align}
	Finally we prove that if the event
	\begin{equation} \label{eq:event}
		\left\{\sup_{t \leq T_m} \sup_{x \in D} |Z_n((t + \rho_n) ,x)| \leq 2^{m-2} \right\}
	\end{equation} 
	occurs, then $|v(\rho_{n+1})|_{L^\infty}$ falls to $2^{m-1}$ before it can reach $2^{m+1}$.
	Specifically, because \eqref{eq:S-bound}--\eqref{eq:K-bound} prove that $|S_n(T_m)|_{L^\infty} + |K(T_m)|_{L^\infty} \leq 2^{m-2}$ it follows that $|u(\rho_n + T_m) |_{L^\infty} \leq 2^{m-1}$ on this event \eqref{eq:event}. On the other hand, $|S_n(t)|_{L^\infty} \leq 2^m$ for all $t \in [0,T_m]$ and it  follows that on this event \eqref{eq:event}, $\sup_{t \leq T_m} |u(\rho_n + t)| \leq 2^m + 2^{m-3} + 2^{m-2} < 2^{m+1}$. This implies that if the event \eqref{eq:event} occurs that $|u(\rho_n + t)|_{L^\infty}$ falls to the level  $2^{m-1}$ before it can rise to the level  $2^{m+1}$.  Therefore, for $m>m_0$
	\begin{align}
		&\Pro \left( |u(\rho_{n+1}  )|_{L^\infty} = 2^{m+1} \Big|  |v(\rho_{n}  )|_{L^\infty} = 2^m \right) \nonumber\\
		&\leq  \Pro \left(\sup_{t \leq T_m} \sup_{x \in D} |Z_n((t + \rho_n) \wedge \tau^1_M,x)| > 2^{m-2} \Big| |v(\rho_{n}  )|_{L^\infty} = 2^m \right)\nonumber\\
		&\leq
		C_p M^{\frac{p}{2} - 3} \E \left(\int_{\rho_{n} }^{\rho_{n+1}} \int_D (\sigma(v(s,y)))^2 dyds \Big| |v(\rho_n)|_{L^\infty} = 2^m\right).
	\end{align}
\end{proof}

\begin{proof}[Proof of Theorem \ref{thm:v-no-explode}]
	Fix  $M>0, \e>0$ and let $\rho_{n}$ be defined from \eqref{eq:rho-def}. Let $m_0$ be from \eqref{eq:K-bound}. We add up the conditional probabilities from Lemma \ref{lem:doubling-probability} to see that for any $n \in \mathbb{N}$,
	\begin{align}
			&\Pro \left(|v(\rho_{n + 1} ) |_{L^\infty}= 2 |v(\rho_{n})|_{L^\infty} \text{ and } |v(\rho_{n})|_{L^\infty} > 2^{m_0} \right)\nonumber\\
			&= \sum_{m=m_0}^\infty  C M^{\frac{3p}{2}-3} \E \left(\int_{\rho_n}^{\rho_{n+1}} \int_D (\sigma(v(s,y)))^2dyds \Big| |v(\rho_n)|_{L^\infty} = 2^m\right) \Pro \left(|v(\rho_n)|_{L^\infty} = 2^m \right)\nonumber\\
			&\leq C M^{\frac{3p}{2}-3} \E \left(\int_{\rho_n}^{\rho_{n+1}} \int_D (\sigma(v(s,y)))^2dyds \right).
	\end{align}
	Now add these probabilities with respect to $n$
	\begin{align} \label{eq:BC}
		&\sum_{n=1}^\infty \Pro \left(|v(\rho_{n + 1} ) |_{L^\infty}= 2 |v(\rho_{n})|_{L^\infty} \text{ and } |v(\rho_{n})|_{L^\infty} > 2^{m_0} \right)\nonumber\\
		&\leq C M^{\frac{3p}{2} - 3} \sum_{n=1}^\infty  \E \int_{\rho_{n} }^{\rho_{n+1}} (\sigma(v(s,y)))^2 dyds \nonumber \\
		&\leq C M^{\frac{3p}{2} - 3} \E \int_{0}^{\tau^{\inf}_\e \wedge \tau^1_M} (\sigma(v(s,y)))^2 dyds.
	\end{align}
	The last line is a consequence of the fact that all of the $\rho_n$ are defined to be smaller than $ \tau^{\inf}_\e \wedge \tau^1_M$. The right-hand side of \eqref{eq:BC} is proportional to the expectation of the quadratic variation from \eqref{eq:L3-bound}, which is finite. Therefore,
	
	\begin{equation}
		\sum_{n=1}^\infty \Pro \left(|v(\rho_{n + 1} ) |_{L^\infty} = 2|v(\rho_{n } ) |_{L^\infty} \text{ and }  |v(\rho_{n } ) |_{L^\infty} > 2^{m_0}\right) < +\infty.
	\end{equation}
	The Borel-Cantelli Lemma guarantees that with probability one, the events $\{|v(\rho_{n + 1} ) |_{L^\infty} = 2|v(\rho_{n }) |_{L^\infty} \text{ and } |v(\rho_{n } ) |_{L^\infty}> 2^{m_0}\}$
	only happen a finite number of times. This means that the $L^\infty$ norm cannot possibly explode before time $ \tau^{\inf}_\e \wedge \tau^1_M$ because the $L^\infty$ norm stops doubling when $m$ gets big.
	This proves that for any $\e>0$ and $M<\infty$,
	\begin{equation} \label{eq:prob-eps-M}
		\Pro \left( (\tau^{\inf}_\e \wedge \tau^1_M) < \tau^\infty_\infty\right) =1.
	\end{equation}
	Next, we argue via Proposition \ref{prop:v-positive} and Lemma \ref{lem:L1} that for arbitrary $T>0$ and small enough $\e>0$ and large enough $M>0$, the stopping times $\tau^{\inf}_\e$ and $\tau^1_M$ are both larger than $T$ with high probability.
	Let $\eta\in (0,1)$ and $T>0$ be arbitrary. Proposition \ref{prop:v-positive} implies that there exists $\e>0$ small enough so that
	\begin{equation} \label{eq:eps-small-rare}
		\Pro\left( \tau^{\inf}_\e < (T \wedge \tau^\infty_\infty) \right)=\Pro\left(\inf_{t \in [0,T\wedge \tau^\infty_\infty)} \inf_{x \in D} v(t,x) \leq \e \right) < \frac{\eta}{2}.
	\end{equation}
	With this choice of $\e>0$, we estimate the probability that the $L^1$ norm of $v(t,x)$ is large. For $M>0$ to be chosen later,
	\begin{align} 
		& \Pro\left( \tau^1_M < (T \wedge \tau^\infty_\infty) \text{ or } \tau^{\inf}_\e < (T \wedge \tau^\infty_\infty)\right) \nonumber\\
		&=\Pro\left( \sup_{t \in [0,(T \wedge \tau^\infty_\infty)]} \int_Dv(t,x)dx > M \text{ or } \tau^{\inf}_\e < (T \wedge \tau^\infty_\infty)\right)\nonumber\\
		&\leq \Pro\left( \tau^{\inf}_\e <(T \wedge \tau^\infty_\infty)\right) \nonumber\\
		&\qquad
		+\Pro\left(\sup_{t \in [0,T \wedge \tau^\infty_\infty]} \int_Dv(t,x)dx > M\text{ and } \tau^{\inf}_\e \geq (T\wedge \tau^\infty_\infty)\right)\nonumber\\
		&\leq \frac{\eta}{2}
		+ \Pro\left(\sup_{t \in [0,T \wedge \tau^\infty_\infty \wedge \tau^{\inf}_\e]} \int_Dv(t,x)dx > M\right)
	\end{align}
	The last line follows from \eqref{eq:eps-small-rare} and the fact that on the event $\{\tau^{\inf}_\e \geq T \wedge \tau^\infty_\infty\}$,  $T \wedge \tau^\infty_\infty = T \wedge \tau^\infty_\infty \wedge \tau^{\inf}_\e$.
	By Lemma \ref{lem:L1}, we can choose $M>0$ large enough so that
	\begin{equation} \label{eq:big-L1-rare}
		\Pro\left( \tau^1_M < (T \wedge \tau^\infty_\infty) \text{ or } \tau^{\inf}_\e < (T \wedge \tau^\infty_\infty)\right) < \eta.
	\end{equation}
	Therefore, with these choices of $\e>0$ and $M>0$,  \eqref{eq:eps-small-rare} and \eqref{eq:big-L1-rare} imply
	\begin{equation}
		\Pro \left( \tau^{\inf}_\e \geq T \wedge \tau^\infty_\infty \text{ and } \tau^1_M \geq T \wedge \tau^\infty_\infty \right) \geq 1 - \eta.
	\end{equation}
	This combined with \eqref{eq:prob-eps-M} implies that
	\begin{equation}
		\Pro(T < \tau^\infty_\infty) > 1 - \eta.
	\end{equation}
	The choice of $\eta>0$ was arbitrary. Therefore,
	\begin{equation}
		\Pro(T < \tau^\infty_\infty) = 1.
	\end{equation}
	This is true for arbitrary $T>0$ and therefore,
	\begin{equation}
		\Pro\left(\tau^\infty_\infty=\infty \right)=1
	\end{equation}
	and $v(t,x)$ cannot explode in finite time.
	
\end{proof}

\bibliography{superlinear}

@article {mueller-2000,
	AUTHOR = {Mueller, Carl},
	TITLE = {The critical parameter for the heat equation with a noise term
	to blow up in finite time},
	JOURNAL = {Ann. Probab.},
	FJOURNAL = {The Annals of Probability},
	VOLUME = {28},
	YEAR = {2000},
	NUMBER = {4},
	PAGES = {1735--1746},
	ISSN = {0091-1798,2168-894X},
	MRCLASS = {60H15 (35B40 35K05 35R60 60H40)},
	MRNUMBER = {1813841},
	MRREVIEWER = {Richard\ B.\ Sowers},
	DOI = {10.1214/aop/1019160505},
	URL = {https://doi.org/10.1214/aop/1019160505},
}

@article {mueller-1991,
	AUTHOR = {Mueller, Carl},
	TITLE = {Long time existence for the heat equation with a noise term},
	JOURNAL = {Probab. Theory Related Fields},
	FJOURNAL = {Probability Theory and Related Fields},
	VOLUME = {90},
	YEAR = {1991},
	NUMBER = {4},
	PAGES = {505--517},
	ISSN = {0178-8051,1432-2064},
	MRCLASS = {60H15 (35R60)},
	MRNUMBER = {1135557},
	MRREVIEWER = {Ya.\ \={I}.\ B\={\i}lopol\cprime s\cprime ka},
	DOI = {10.1007/BF01192141},
	URL = {https://doi.org/10.1007/BF01192141},
}

@article {mueller-1997,
	AUTHOR = {Mueller, Carl},
	TITLE = {Long time existence for the wave equation with a noise term},
	JOURNAL = {Ann. Probab.},
	FJOURNAL = {The Annals of Probability},
	VOLUME = {25},
	YEAR = {1997},
	NUMBER = {1},
	PAGES = {133--151},
	ISSN = {0091-1798,2168-894X},
	MRCLASS = {60H15 (35L05 35R60)},
	MRNUMBER = {1428503},
	MRREVIEWER = {Fred\ Espen\ Benth},
	DOI = {10.1214/aop/1024404282},
	URL = {https://doi.org/10.1214/aop/1024404282},
}

@article {ms-1993,
	AUTHOR = {Mueller, Carl and Sowers, Richard},
	TITLE = {Blowup for the heat equation with a noise term},
	JOURNAL = {Probab. Theory Related Fields},
	FJOURNAL = {Probability Theory and Related Fields},
	VOLUME = {97},
	YEAR = {1993},
	NUMBER = {3},
	PAGES = {287--320},
	ISSN = {0178-8051,1432-2064},
	MRCLASS = {60H15 (35K05 35R60)},
	MRNUMBER = {1245247},
	MRREVIEWER = {Krystyna\ Twardowska},
	DOI = {10.1007/BF01195068},
	URL = {https://doi.org/10.1007/BF01195068},
}

@article {mueller-1998,
	AUTHOR = {Mueller, Carl},
	TITLE = {Long-time existence for signed solutions of the heat equation
	with a noise term},
	JOURNAL = {Probab. Theory Related Fields},
	FJOURNAL = {Probability Theory and Related Fields},
	VOLUME = {110},
	YEAR = {1998},
	NUMBER = {1},
	PAGES = {51--68},
	ISSN = {0178-8051,1432-2064},
	MRCLASS = {60H15 (35K55 35R60 60H20)},
	MRNUMBER = {1602036},
	MRREVIEWER = {Richard\ B.\ Sowers},
	DOI = {10.1007/s004400050144},
	URL = {https://doi.org/10.1007/s004400050144},
}

@article {kotelenez-1992,
	AUTHOR = {Kotelenez, Peter},
	TITLE = {Comparison methods for a class of function valued stochastic
	partial differential equations},
	JOURNAL = {Probab. Theory Related Fields},
	FJOURNAL = {Probability Theory and Related Fields},
	VOLUME = {93},
	YEAR = {1992},
	NUMBER = {1},
	PAGES = {1--19},
	ISSN = {0178-8051,1432-2064},
	MRCLASS = {60H15 (35R15 35R60)},
	MRNUMBER = {1172936},
	MRREVIEWER = {Ralf\ Manthey},
	DOI = {10.1007/BF01195385},
	URL = {https://doi.org/10.1007/BF01195385},
}

@article {mueller-1991-support,
	AUTHOR = {Mueller, Carl},
	TITLE = {On the support of solutions to the heat equation with noise},
	JOURNAL = {Stochastics Stochastics Rep.},
	FJOURNAL = {Stochastics and Stochastics Reports},
	VOLUME = {37},
	YEAR = {1991},
	NUMBER = {4},
	PAGES = {225--245},
	ISSN = {1045-1129},
	MRCLASS = {60H15 (35R60)},
	MRNUMBER = {1149348},
	MRREVIEWER = {A.\ Badrikian},
	DOI = {10.1080/17442509108833738},
	URL = {https://doi.org/10.1080/17442509108833738},
}

@article {bg-2009,
	AUTHOR = {Fern\'{a}ndez Bonder, Julian and Groisman, Pablo},
	TITLE = {Time-space white noise eliminates global solutions in
	reaction-diffusion equations},
	JOURNAL = {Phys. D},
	FJOURNAL = {Physica D. Nonlinear Phenomena},
	VOLUME = {238},
	YEAR = {2009},
	NUMBER = {2},
	PAGES = {209--215},
	ISSN = {0167-2789,1872-8022},
	MRCLASS = {35R60 (35K57 60H15)},
	MRNUMBER = {2516340},
	DOI = {10.1016/j.physd.2008.09.005},
	URL = {https://doi.org/10.1016/j.physd.2008.09.005},
}

@article {fln-2019,
	AUTHOR = {Foondun, Mohammud and Liu, Wei and Nane, Erkan},
	TITLE = {Some non-existence results for a class of stochastic partial
	differential equations},
	JOURNAL = {J. Differential Equations},
	FJOURNAL = {Journal of Differential Equations},
	VOLUME = {266},
	YEAR = {2019},
	NUMBER = {5},
	PAGES = {2575--2596},
	ISSN = {0022-0396,1090-2732},
	MRCLASS = {35R60 (60H15)},
	MRNUMBER = {3906260},
	DOI = {10.1016/j.jde.2018.08.039},
	URL = {https://doi.org/10.1016/j.jde.2018.08.039},
}

@article {ch-2023,
	AUTHOR = {Chen, Le and Huang, Jingyu},
	TITLE = {Superlinear stochastic heat equation on {$\Bbb{R}^d$}},
	JOURNAL = {Proc. Amer. Math. Soc.},
	FJOURNAL = {Proceedings of the American Mathematical Society},
	VOLUME = {151},
	YEAR = {2023},
	NUMBER = {9},
	PAGES = {4063--4078},
	ISSN = {0002-9939,1088-6826},
	MRCLASS = {60H15 (35K57 35R60)},
	MRNUMBER = {4607649},
	DOI = {10.1090/proc/16436},
	URL = {https://doi.org/10.1090/proc/16436},
}

@article {salins-2022,
	AUTHOR = {Salins, Michael},
	TITLE = {Global solutions to the stochastic reaction-diffusion equation
	with superlinear accretive reaction term and superlinear
	multiplicative noise term on a bounded spatial domain},
	JOURNAL = {Trans. Amer. Math. Soc.},
	FJOURNAL = {Transactions of the American Mathematical Society},
	VOLUME = {375},
	YEAR = {2022},
	NUMBER = {11},
	PAGES = {8083--8099},
	ISSN = {0002-9947,1088-6850},
	MRCLASS = {60H15},
	MRNUMBER = {4491446},
	MRREVIEWER = {Markus\ Kunze},
	DOI = {10.1090/tran/8763},
	URL = {https://doi.org/10.1090/tran/8763},
}

@article {lz-2022,
	AUTHOR = {Liang, Fei and Zhao, Shuangshuang},
	TITLE = {Global existence and finite time blow-up for a stochastic
	non-local reaction-diffusion equation},
	JOURNAL = {J. Geom. Phys.},
	FJOURNAL = {Journal of Geometry and Physics},
	VOLUME = {178},
	YEAR = {2022},
	PAGES = {Paper No. 104577, 21},
	ISSN = {0393-0440,1879-1662},
	MRCLASS = {60H15 (35B50 35R60)},
	MRNUMBER = {4433058},
	DOI = {10.1016/j.geomphys.2022.104577},
	URL = {https://doi.org/10.1016/j.geomphys.2022.104577},
}

@article {fn-wave-2022,
	AUTHOR = {Foondun, Mohammud and Nualart, Eulalia},
	TITLE = {Non-existence results for stochastic wave equations in one
	dimension},
	JOURNAL = {J. Differential Equations},
	FJOURNAL = {Journal of Differential Equations},
	VOLUME = {318},
	YEAR = {2022},
	PAGES = {557--578},
	ISSN = {0022-0396,1090-2732},
	MRCLASS = {60H15 (35K57)},
	MRNUMBER = {4387900},
	DOI = {10.1016/j.jde.2022.02.038},
	URL = {https://doi.org/10.1016/j.jde.2022.02.038},
}

@article {sz-2022,
	AUTHOR = {Shang, Shijie and Zhang, Tusheng},
	TITLE = {Stochastic heat equations with logarithmic nonlinearity},
	JOURNAL = {J. Differential Equations},
	FJOURNAL = {Journal of Differential Equations},
	VOLUME = {313},
	YEAR = {2022},
	PAGES = {85--121},
	ISSN = {0022-0396,1090-2732},
	MRCLASS = {60H15 (35R60)},
	MRNUMBER = {4362370},
	MRREVIEWER = {Jinlong\ Wei},
	DOI = {10.1016/j.jde.2021.12.033},
	URL = {https://doi.org/10.1016/j.jde.2021.12.033},
}

@article {ms-wave-2021,
	AUTHOR = {Millet, Annie and Sanz-Sol\'{e}, Marta},
	TITLE = {Global solutions to stochastic wave equations with superlinear
	coefficients},
	JOURNAL = {Stochastic Process. Appl.},
	FJOURNAL = {Stochastic Processes and their Applications},
	VOLUME = {139},
	YEAR = {2021},
	PAGES = {175--211},
	ISSN = {0304-4149,1879-209X},
	MRCLASS = {60H15 (35R60 60G17 60G60)},
	MRNUMBER = {4264843},
	MRREVIEWER = {Omar\ Mellah},
	DOI = {10.1016/j.spa.2021.05.002},
	URL = {https://doi.org/10.1016/j.spa.2021.05.002},
}

@article {fn-2021,
	AUTHOR = {Foondun, Mohammud and Nualart, Eulalia},
	TITLE = {The {O}sgood condition for stochastic partial differential
	equations},
	JOURNAL = {Bernoulli},
	FJOURNAL = {Bernoulli. Official Journal of the Bernoulli Society for
	Mathematical Statistics and Probability},
	VOLUME = {27},
	YEAR = {2021},
	NUMBER = {1},
	PAGES = {295--311},
	ISSN = {1350-7265,1573-9759},
	MRCLASS = {60H15},
	MRNUMBER = {4177371},
	MRREVIEWER = {Leila\ Setayeshgar},
	DOI = {10.3150/20-BEJ1240},
	URL = {https://doi.org/10.3150/20-BEJ1240},
}

@article {bezdek-2018,
	AUTHOR = {Bezdek, Pavel},
	TITLE = {Existence and blow-up of solutions to the fractional
	stochastic heat equations},
	JOURNAL = {Stoch. Partial Differ. Equ. Anal. Comput.},
	FJOURNAL = {Stochastic Partial Differential Equations. Analysis and
	Computations},
	VOLUME = {6},
	YEAR = {2018},
	NUMBER = {1},
	PAGES = {73--108},
	ISSN = {2194-0401,2194-041X},
	MRCLASS = {35R60 (35A01 35B44 35R11 60H15)},
	MRNUMBER = {3768995},
	MRREVIEWER = {Le\ Chen},
	DOI = {10.1007/s40072-017-0103-8},
	URL = {https://doi.org/10.1007/s40072-017-0103-8},
}

@article {fp-2015,
	AUTHOR = {Foondun, Mohammud and Parshad, Rana D.},
	TITLE = {On non-existence of global solutions to a class of stochastic
	heat equations},
	JOURNAL = {Proc. Amer. Math. Soc.},
	FJOURNAL = {Proceedings of the American Mathematical Society},
	VOLUME = {143},
	YEAR = {2015},
	NUMBER = {9},
	PAGES = {4085--4094},
	ISSN = {0002-9939,1088-6826},
	MRCLASS = {60H15 (35A01 35B44 35K15 35K91 35R60 82C44)},
	MRNUMBER = {3359596},
	MRREVIEWER = {Carles\ Rovira},
	DOI = {10.1090/proc/12036},
	URL = {https://doi.org/10.1090/proc/12036},
}

@article {bd-2002,
	AUTHOR = {de Bouard, A. and Debussche, A.},
	TITLE = {On the effect of a noise on the solutions of the focusing
	supercritical nonlinear {S}chr\"{o}dinger equation},
	JOURNAL = {Probab. Theory Related Fields},
	FJOURNAL = {Probability Theory and Related Fields},
	VOLUME = {123},
	YEAR = {2002},
	NUMBER = {1},
	PAGES = {76--96},
	ISSN = {0178-8051,1432-2064},
	MRCLASS = {60H15 (35B20 35Q55 76B99)},
	MRNUMBER = {1906438},
	MRREVIEWER = {Jinqiao\ Duan},
	DOI = {10.1007/s004400100183},
	URL = {https://doi.org/10.1007/s004400100183},
}

@article {dkz-2019,
	AUTHOR = {Dalang, Robert C. and Khoshnevisan, Davar and Zhang, Tusheng},
	TITLE = {Global solutions to stochastic reaction-diffusion equations
	with super-linear drift and multiplicative noise},
	JOURNAL = {Ann. Probab.},
	FJOURNAL = {The Annals of Probability},
	VOLUME = {47},
	YEAR = {2019},
	NUMBER = {1},
	PAGES = {519--559},
	ISSN = {0091-1798,2168-894X},
	MRCLASS = {60H15 (35B33 35B45 35K57 35R60)},
	MRNUMBER = {3909975},
	MRREVIEWER = {Petru\ A.\ Cioica-Licht},
	DOI = {10.1214/18-AOP1270},
	URL = {https://doi.org/10.1214/18-AOP1270},
}

@article {salins-2022-dissip,
	AUTHOR = {Salins, Michael},
	TITLE = {Global solutions for the stochastic reaction-diffusion
	equation with super-linear multiplicative noise and strong
	dissipativity},
	JOURNAL = {Electron. J. Probab.},
	FJOURNAL = {Electronic Journal of Probability},
	VOLUME = {27},
	YEAR = {2022},
	PAGES = {Paper No. 12, 17},
	ISSN = {1083-6489},
	MRCLASS = {60H15 (35R60)},
	MRNUMBER = {4372099},
	MRREVIEWER = {Markus\ Kunze},
	DOI = {10.1214/22-ejp740},
	URL = {https://doi.org/10.1214/22-ejp740},
}

@book {dpz,
	AUTHOR = {Da Prato, Giuseppe and Zabczyk, Jerzy},
	TITLE = {Stochastic equations in infinite dimensions},
	SERIES = {Encyclopedia of Mathematics and its Applications},
	VOLUME = {152},
	EDITION = {Second},
	PUBLISHER = {Cambridge University Press, Cambridge},
	YEAR = {2014},
	PAGES = {xviii+493},
	ISBN = {978-1-107-05584-1},
	MRCLASS = {60H15 (34F05 34Gxx)},
	MRNUMBER = {3236753},
	MRREVIEWER = {David\ Nualart},
	DOI = {10.1017/CBO9781107295513},
	URL = {https://doi.org/10.1017/CBO9781107295513},
}

@article {av-2023,
	AUTHOR = {Agresti, Antonio and Veraar, Mark},
	TITLE = {Reaction-diffusion equations with transport noise and critical
	superlinear diffusion: local well-posedness and positivity},
	JOURNAL = {J. Differential Equations},
	FJOURNAL = {Journal of Differential Equations},
	VOLUME = {368},
	YEAR = {2023},
	PAGES = {247--300},
	ISSN = {0022-0396,1090-2732},
	MRCLASS = {60H15 (35A01 35B44 35B65 35K57 35K90 35R60 58D25)},
	MRNUMBER = {4598499},
	DOI = {10.1016/j.jde.2023.05.038},
	URL = {https://doi.org/10.1016/j.jde.2023.05.038},
}

@article {dalang-1999,
	AUTHOR = {Dalang, Robert C.},
	TITLE = {Extending the martingale measure stochastic integral with
	applications to spatially homogeneous s.p.d.e.'s},
	JOURNAL = {Electron. J. Probab.},
	FJOURNAL = {Electronic Journal of Probability},
	VOLUME = {4},
	YEAR = {1999},
	PAGES = {no. 6, 29},
	ISSN = {1083-6489},
	MRCLASS = {60H05 (35R60 60G15 60G48 60H15)},
	MRNUMBER = {1684157},
	MRREVIEWER = {Marta\ Sanz Sol\'{e}},
	DOI = {10.1214/EJP.v4-43},
	URL = {https://doi.org/10.1214/EJP.v4-43},
}

@article {cerrai-2003,
	AUTHOR = {Cerrai, Sandra},
	TITLE = {Stochastic reaction-diffusion systems with multiplicative
	noise and non-{L}ipschitz reaction term},
	JOURNAL = {Probab. Theory Related Fields},
	FJOURNAL = {Probability Theory and Related Fields},
	VOLUME = {125},
	YEAR = {2003},
	NUMBER = {2},
	PAGES = {271--304},
	ISSN = {0178-8051,1432-2064},
	MRCLASS = {60H15 (35K57 35R60 47N30)},
	MRNUMBER = {1961346},
	MRREVIEWER = {B.\ G.\ Pachpatte},
	DOI = {10.1007/s00440-002-0230-6},
	URL = {https://doi.org/10.1007/s00440-002-0230-6},
}

@incollection {walsh-1986,
	AUTHOR = {Walsh, John B.},
	TITLE = {An introduction to stochastic partial differential equations},
	BOOKTITLE = {\'{E}cole d'\'{e}t\'{e} de probabilit\'{e}s de
	{S}aint-{F}lour, {XIV}---1984},
	SERIES = {Lecture Notes in Math.},
	VOLUME = {1180},
	PAGES = {265--439},
	PUBLISHER = {Springer, Berlin},
	YEAR = {1986},
	ISBN = {3-540-16441-3},
	MRCLASS = {60H15 (35R60 60G20 60J80)},
	MRNUMBER = {876085},
	MRREVIEWER = {Luis\ G.\ Gorostiza},
	DOI = {10.1007/BFb0074920},
	URL = {https://doi.org/10.1007/BFb0074920},
}

\end{document}